\numberwithin{equation}{section}
\newtheorem{theorem}{Theorem}
\newtheorem{lemma}{Lemma}
\def\ve{\varepsilon}
\title[MDP for a Stochastic Schr\"odinger  Equation]
{{Moderate Deviations for a Stochastic Schr\"odinger   Equation with Linear Drift}}
\author[P. Fatheddin]{Parisa Fatheddin}
\address{Department of Mathematics, Ohio State University, USA.}
\email{fatheddin.1@osu.edu}
\author[H. Lisei]{Hannelore Lisei}
\address{Faculty of Mathematics and Computer Science, Babe\c{s}-Bolyai University, Cluj-Napoca, Romania}
\email{hannelore.lisei@ubbcluj.ro}
\subjclass[2010]{Primary: 60F10; Secondary: 60H15, 60F05}
\keywords{Moderate deviation principle, stochastic
partial differential equation, stochastic Schr\"odinger equations.}
\begin{document}

\begin{abstract}
Moderate deviation principle is achieved by the weak convergence approach for a   stochastic Schr\"odinger type equation with linear drift term and noise driven by a $Q$-Wiener process. The central limit theorem is also shown for the equation to further analyze its asymptotic behavior.
\end{abstract}
\maketitle

\section{Introduction}
\indent Laser beam propagation through random media and many other phenomena in optics are modeled by Schr\"odinger type equations. Most of the study on these models in Physics has been concentrated on the deterministic equation. See for example, \cite{(19),(20),(36),(39)}. Here we study the stochastic counterpart. Denoting the space of complex numbers as $\mathbb{C}$, we consider the Sobolev   spaces, $H:= L^{2}(G;\mathbb{C})$ and $V:= H_{0}^{1}(G;\mathbb{C})$ for a bounded domain with smooth boundary, $G$ in $\mathbb{R}^{d}$ for any $1\leq d<\infty$, and we study the asymptotic behavior of the following equation when perturbed by small noise
\begin{align}\label{originalequation}
du^{\varepsilon}(t)&=  i \Delta u^{\varepsilon}(t)dt +\mathcal{U}(t)u^{\varepsilon}(t)dt + \sqrt{\varepsilon} g(t,u^{\varepsilon}(t))dW(t), t\in[0,T],\, \varepsilon\in(0,1),\\
u^{\varepsilon}(0) &=   \gamma   \in V. \nonumber
\end{align}
The potential $\mathcal{U} $ is a deterministic, bounded, complex-valued function (see Section \ref{sec2}) and $W$ is an $H$-valued $Q$-Wiener process. We note that the method in this paper  can also be adapted to hold for a noise driven by a cylindrical Wiener process. \\
\indent The existence and uniqueness of solution for a stochastic nonlinear Schr\"odinger equation (SNLSE) have been established by V. Barbu, M. R\"ockner and D. Zhang in the case of multiplicative noise in \cite{(4),(5)}.
For the global well-posedness of mild solutions see the results by A. Bouard and A. Debussche, \cite{(8)} for additive and \cite{(7), (8)} for multiplicative noise. Most of the authors investigating the well-posedness of SNLSE have concentrated on the mild solution by relying on the Strichartz estimates. A different approach is taken in \cite{Keller, (29)} where properties  of variational solutions are established. Notice that in the context of stochastic partial differential equations (SPDEs), each variational solution is also a mild solution (and also a weak solution) but not vice versa.  Equation \eqref{originalequation} is a special case of the stochastic Schr\"odinger equation considered in \cite{(29)}, where  the existence and uniqueness of  variational solutions  in space $L^{2}(\Omega;\mathcal{C}([0,T];H))\cap L^{2}(\Omega \times [0,T];V)$ are proved. Here we will use this result and prove the moderate deviation principle (MDP) and a central limit theorem in the space $\mathcal{C}([0,T];H)$ by letting $\varepsilon$ in \eqref{originalequation} tend to zero and investigate the convergence of the stochastic equation to its deterministic form. To the best of our knowledge, the results presented here are the first results on moderate deviations as well as the central limit theorem for any type of stochastic Schr\"odinger equation. We prove these two theorems in the space $\mathcal{C}([0,T];H)$ instead of the space $\mathcal{C}([0,T];H)\cap L^2(0,T;V)$ or $\mathcal{C}([0,T];V)$, since the term involving the   operator $i\Delta  $ (hence the term including the norm in $V$) turns into the term $\int_0^t\text{Im}\|u^{\varepsilon}(s)\|^2_V ds $ and thus vanishes when applying to  \eqref{originalequation} the It\^o formula for estimating  $\|u^{\varepsilon}(t)\|^2_H$.
\\
\indent Recall that MDP is the large deviation principle (LDP) for the centered process multiplied by a rate slower than that is used in the case of LDP for the original process. In most cases, the method used to prove the LDP for a process can be used to prove its MDP. In the setting of processes formed by solutions to SPDEs, there are two main approaches taken in the literature to establish the LDP. They are the Azencott method introduced by \cite{Azencott, Azencott2} and the weak convergence approach offered by \cite{(11), BDM}. In addition, another technique used by some authors such as those in \cite{Wang2}, specifically to achieve the MDP, is to prove that the process formed for the MDP is exponentially equivalent to the process used to prove the LDP. This is based on \cite[Theorem 4.2.13]{Dembo} which states that if two processes are exponentially equivalent and one satisfies the LDP then the other process also satisfies the LDP. Here we apply the weak convergence approach to prove the MDP for \eqref{originalequation}.\\
\indent In the literature, the LDP for SNLSE  has been established in \cite{(27)} in space $H^{1}(\mathbb{R}^{d})$ by the Azencott method and in \cite{(24)} in space $\mathcal{C}([0,T];L^{2}(0,1))$ by the weak convergence approach. We note that the LPD in \cite{(24)} is on an equation of type \eqref{originalequation}, where the second term is nonlinear, namely $f(u^\varepsilon)=\lambda |u^\varepsilon |^{2\sigma}u^\varepsilon $, which satisfies $\text{Re}(f(u^\varepsilon)-f(u^0),u^\varepsilon-u^0) \ge 0$. In our setting of the MDP, this property does not hold. Hence, we can no longer apply bounds established in \cite{Keller} as in \cite{(24)}   to attain the needed bound for our process in the space $V$. Thus, we have considered  the linear term $f(t,u^\varepsilon)= {\mathcal U}(t) \cdot u^\varepsilon  $. This linear term can be replaced by a nonlinear Lipschitz continuous function $f:[0,T]\times H\to H$ with assumptions similar to those  mentioned in \cite{(29)}.   \\
\indent In general, to achieve the LDP based on the weak convergence approach provided by \cite{BDM}, one forms the skeleton equation (the original equation with noise replaced by a function in $L^2$) and the stochastic controlled equation (the original equation plus the controlled integral from the skeleton equation) and verifies the existence and uniqueness of these two equations along with those of the original equation. The well-posedness of the variational solution of \eqref{originalequation},  its skeleton equation and stochastic controlled equation  follows by using  \cite{(29)} due to similarity in equation (see also \cite{(24)}). For the weak convergence of the stochastic controlled equation to the skeleton equation, as required in this method, we follow the time discretization technique provided by \cite{Bessaih, Millet, Duan}. The main tool for this convergence is to show that the family of solutions to the stochastic controlled equations corresponding to the set $\{h_{\varepsilon}\}_{\varepsilon>0}$ is tight, which enables one to then apply the Skohorod representation theorem and obtain the convergence in distribution required by proving the convergence in probability in the new probability space. Afterwards, by verifying that the limit has the form of the skeleton equation, the condition is attained by noting the uniqueness of solutions.  \\
\indent We also prove a central limit theorem for \eqref{originalequation} by showing that the centered process converges in probability to a unique variational solution of a deterministic PDE. We refer the reader to \cite{Cheng, Hu, Li, Wang, Yang} for other similar results achieved for SPDEs.\\
\indent We begin in Section \ref{sec2} by introducing the notations used throughout the paper and provide the statements of the main results. Sections \ref{sec3} and \ref{sec6} are then devoted to the proof of the MDP by the weak convergence approach and a central limit theorem, respectively. An Appendix is also provided to present some proofs and background material.


\section{Preliminaries} \label{sec2}
\indent In this section we provide the notations and estimates applied in the paper and state the main results.
 {Let $\|\cdot\|$ and $\|\cdot\|_{V}$ denote the norms for $H:= L^{2}(G; {\mathbb C})$ and $V:= H^{1}_0(G; {\mathbb C})$, respectively.}
Assume $(\Omega, \mathcal{F}, (\mathcal{F}_{t})_{t\in [0,T]}, P)$ is a filtered complete probability space. The noise, $W$, is
given as $W(t)= \sum_{j=1}^{\infty} \sqrt{\lambda_{j}}\beta_{j}(t) e_{j}$, where $\{\beta_{j}\}_{j}$ is a sequence of independent one-dimensional real Brownian motions on the  probability space, $\{e_{j}\}_{j}$ is a complete orthonormal system on $H$ acting as eigenvectors corresponding to $\{\lambda_{j}\}_{j}$ with $Qe_{j}=\lambda_{j}e_{j},  {j\in{\mathbb N}}$, where $Q$ is the trace-class covariance operator of the noise. Furthermore, let $H_{0}:=Q^{\frac12}H$ with norm denoted as $|\cdot|_{0}$ and inner product,   $(u,v)_{0}=(Q^{-\frac12}u, Q^{-\frac12}v)$  for $u,v\in H_{0}$.  {We let $L_{2}(H_{0},H)$ and $L_{2}(H_{0},V)$ denote    Hilbert-Schmidt spaces} and impose the conditions below on the noise coefficient $g:[0,T]\times H \to L_{2}(H_{0},H)$
\begin{align}
\|g(t,u)\|_{L_{2}(H_{0},H)}^{2} &\leq  k_{1}\hspace{.08cm}(1+\|u\|^{2}), \hspace{.3cm} \text{for all } \hspace{.3cm} u\in H, \label{(2.6)}\\
\|g(t,u)\|^{2}_{L_{2}(H_{0},V)} &\leq   k_{2}\hspace{.08cm}(1+\|u\|_{V}^{2}), \hspace{.3cm} \text{for all} \hspace{.3cm} u\in V,\label{(2.7)}\\
\|g(t,u)-g(t,v)\|_{L_{2}(H_{0},H)}^{2}&\leq  k_{3}\hspace{.08cm}\|u-v\|^{2}, \hspace{.3cm} \text{for all} \hspace{.3cm} u,v\in H,\label{(2.8)}\\
 {\|g(t,u)-g(t,v)\|_{L_{2}(H_{0},V)}^{2}} & \leq { k_{4}\hspace{.08cm}\|u-v\|^{2}_V, \hspace{.3cm} \text{for all} \hspace{.3cm} u,v\in V,} \label{K4}\\
\|g(t_{1},u)-g(t_{2},u)\|^{2}_{L_{2}(H_{0},H)} &\leq  k_{5}\hspace{.08cm}\|t_{1}-t_{2}\|^{2} \hspace{.3cm} \text{for all} \hspace{.3cm}u\in H, \hspace{.08cm} t_{1}, t_{2}\in [0,T],\label{Holder}
\end{align}
 {where $k_{i}>0$, for $i\in\{1,..,5\}$, is a constant.}
The linear term includes $\mathcal{U}:[0,T]\times G\to \mathbb{C}$,    {for which we assume $\mathcal{U}, \frac{\partial\mathcal{U} }{\partial x_{j}}\in L^\infty([0,T]\times G) , j\in\{1,\ldots, d\},$ and there exists $ k_{6}>0 $ such that
\begin{equation} \label{K5}
|\mathcal{U}(t,x)|^{2} + \sum_{j=1}^{d} \left|\frac{\partial \mathcal{U}(t,x)}{\partial x_{j}}\right|^{2} \leq  {k_{6}}, \hspace{.7cm} \text{for every } (t,x)\in [0,T]\times G.
\end{equation}}
 {The operator $A =-\Delta : V\to V^*$ is defined by}
\begin{equation} \label{eqA}
\langle Au,v\rangle = \int_G \nabla u(x)\nabla\overline{v}(x)dx, \mbox{ for each } u,v\in V  ,
\end{equation}
where $\overline{v}$ is the complex conjugate of $v$,  $V^*$ denotes the dual space of $V$ and $\langle\cdot,\cdot\rangle$ is the duality pairing
of $V$ and $V^*$. In fact, we have $A=-\Delta$ (the Laplacian) with zero Dirichlet boundary conditions. Furthermore, $(\cdot ,\cdot)$ denotes the inner product in $H$.
We consider the variational solution, $u^{\varepsilon}(\cdot)$, of \eqref{originalequation}, i.e. for all $t\in[0,T]$, $v\in V$ and almost all $\omega\in \Omega$,
\begin{equation}\label{(2.2)}
(u^{\varepsilon}(t),v)= (\gamma,v) - i\hspace{.08cm}\!\! \int_{0}^{t} \langle Au^{\varepsilon}(s),v\rangle ds + \int_{0}^{t} \!\!(\mathcal{U}(s)u^{\varepsilon}(s),v)ds + \sqrt{\varepsilon} \int_{0}^{t}\!\! (g(s,u^{\varepsilon}(s))dW(s),v).
\end{equation}
 As mentioned in the introduction, the existence and uniqueness of the solution of \eqref{(2.2)} in the space $$L^{2}(\Omega;\mathcal{C}([0,T];H))\cap L^{2}(\Omega \times [0,T];V)$$follows by    \cite[Theorem 1]{(29)}, where the drift term  in our case is linear. Furthermore, the  following estimates hold for positive constants $ N_{2p}(T)$ and  $N(T)$
\begin{align}
\mathbb{E}\sup_{0\leq t\leq T} \|u^{\varepsilon}(t)\|^{2p} &\leq   N_{2p}(T)\hspace{.08cm} (\varepsilon +\|\gamma\|^{2p}), \mbox{ for any }  {1\leq p<\infty},\label{uHbound}
\end{align}
 and
\begin{align}
  \mathbb{E}\int_0^{T} \|u^{\varepsilon}(t)\|^{2}_Vdt\le  N(T)  (\varepsilon +\|\gamma\|^{2}_V) ,\label{uVbound}
\end{align}
see    \cite[Theorem 2]{(29)} and inequality (18) in the proof of \cite[Theorem 1]{(29)}.   \\
\indent For $\varepsilon=0$, \eqref{(2.2)} becomes a deterministic equation with $u^0\in\mathcal{C}([0,T];H) \cap L^{2}(  0,T  ;V)$ as its unique solution, satisfying
\begin{align}
 \sup_{0\leq t\leq T} \|u^{0}(t)\|^{2 }  \leq   N_{2}(T)  \|\gamma\|^{2 },    \label{u0}
\end{align}
and
\begin{align}
\int_0^T \|u^{0}(t)\|^{2}_V dt \leq     N(T)  \|\gamma\|^{2}_V , \mbox{ for any }  {1\leq p<\infty}.\label{uV0}
\end{align}
In   Subsection  \ref{5.3}  of the Appendix we prove that there exists a   constant $K(T)>0$ such that
\begin{align}
 \sup_{0\leq t\leq T} \|u^{0}(t)\|_{V}^{2 }  \leq   K(T)  \|\gamma\|^{2 }_V.
\label{u0sup}
\end{align}
 \indent Recall that to establish the MDP for the family $\{u^{\varepsilon} \}_{\varepsilon>0}$, the  {LDP} needs to be achieved for $\{v^{\varepsilon} \}_{\varepsilon>0}$, where  {for $a(\varepsilon)>0$}
\begin{equation}\label{(2.1)}
v^{\varepsilon}(t) = \frac{a(\varepsilon)}{\sqrt{\varepsilon}}(u^{\varepsilon}(t)-u^{0}(t)), \hspace{.25cm}\text{with} \hspace{.25cm}  {a(\varepsilon)\rightarrow 0}, \hspace{.25cm} \text{and} \hspace{.25cm} \frac{a(\varepsilon)}{\sqrt{\varepsilon}}\rightarrow \infty \hspace{.25cm} \text{as} \hspace{.25cm} \varepsilon \rightarrow 0.
\end{equation}
 If not otherwise stated, we assume that
\begin{equation} \label{epsi}
 {\varepsilon, a(\varepsilon) \in (0,1)} \hspace{.25cm} \mbox{ and }\hspace{.25cm} \frac{\varepsilon}{a^2(\varepsilon)}<1.
\end{equation}
For  {$h\in L^{2}(0,T;H_{0})$, we let $X^{h}$ be the unique variational solution of the skeleton equation of $v^{\varepsilon}$, given by
\begin{equation}\label{Xhcontrolled}
X^{h}(t)=-i \hspace{.08cm}\int_{0}^{t}\!\! AX^{h}(s) ds + \int_{0}^{t}\!\!\mathcal{U}(s) X^{h}(s)ds+ \int_{0}^{t}\!\! g(s,u^{0}(s))h(s)ds, \hspace{.4cm}t\in[0,T],
\end{equation}
and prove the following main result.}

\begin{theorem} \label{TH1}
Suppose conditions \eqref{(2.6)}-\eqref{K5} hold and let $\varepsilon_{0}\in (0,1)$. Then the family of solutions  $\{u^{\varepsilon} \}_{ \varepsilon\in(0,\varepsilon_{0})}$ satisfies the moderate deviation principle in $\mathcal{C}([0,T];H)$ with rate function,
\begin{align}\label{rate}
I(v) =
\displaystyle \inf\left\{ {\frac{1}{2}}\int_{0}^{T} |h(s)|_{0}^{2}ds: h\in L^2( { 0,T} ;H_0)   \mbox{ with } v= \mathcal{G}\left(\int_{0}^{\cdot}h(s)ds\right)\right\},
\end{align}
 {where the infimum of the empty set is taken to be infinity and} $\mathcal{G}(\int_{0}^{\cdot}h(s)ds)$ is the unique variational solution to the skeleton equation \eqref{Xhcontrolled} corresponding to $h\in L^{2}({0,T};H_{0})$.
\end{theorem}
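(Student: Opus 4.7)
The plan is to establish the large deviation principle for $\{v^\varepsilon\}$ defined in \eqref{(2.1)} via the Budhiraja--Dupuis--Maroulas weak convergence criterion, since this is equivalent to the MDP for $\{u^\varepsilon\}$ with the rate function \eqref{rate}. Subtracting the $\varepsilon=0$ version of \eqref{(2.2)} from the general one and rescaling, one sees that $v^\varepsilon$ satisfies, in the variational sense,
\[
(v^\varepsilon(t),v)=-i\int_0^t\langle Av^\varepsilon(s),v\rangle\,ds-\int_0^t(\mathcal{U}(s)v^\varepsilon(s),v)\,ds+a(\varepsilon)\int_0^t(g(s,u^\varepsilon(s))\,dW(s),v),
\]
for all $v\in V$. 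For $h\in L^2([0,T];H_0)$ the skeleton equation is the deterministic
\[
v^h(t)=-i\int_0^t Av^h(s)\,ds-\int_0^t\mathcal{U}(s)v^h(s)\,ds+\int_0^t g(s,u^0(s))h(s)\,ds,
\]
which defines $\mathcal{G}(\int_0^\cdot h\,ds):=v^h$, while the stochastic controlled equation is obtained by shifting $W$ by $\frac{1}{a(\varepsilon)}\int_0^\cdot h_\varepsilon\,ds$. Well-posedness of both follows by adapting \cite[Theorem 1]{(29)} along the lines of \cite{(24)}, using the variational framework and the estimate \eqref{uHbound}.

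Next I would verify the two BDM conditions. For the compactness of the level sets $\{\mathcal{G}(\int_0^\cdot h\,ds):h\in S_R\}$ in $\mathcal{C}([0,T];H)$, I would apply $v^h$ as test function and exploit the fact that $\mathrm{Re}\,i\langle Av^h,v^h\rangle=0$, so the unitary term drops out of the $H$-energy identity. The remaining contributions are handled by \eqref{K5}, the Cauchy--Schwarz inequality, and Gronwall's lemma, yielding a uniform $H$-bound, and an analogous energy identity at the $V$-level uses \eqref{(2.7)} to control the gradient. Time equicontinuity in $H$ follows by bounding the $V^*$-norm of each of the three terms on a subinterval, and Arzelà--Ascoli together with the compact embedding $V\hookrightarrow H$ yields compactness.

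For the weak convergence criterion, given $h_\varepsilon\in S_R$ converging in distribution (weakly in $L^2([0,T];H_0)$) to $h$, I would follow the time-discretization technique of \cite{Millet,Duan}. First, an energy estimate for the stochastic controlled equation, using \eqref{(2.6)} and Girsanov-type bounds on the drift $g\cdot h_\varepsilon$ together with the uniform bound $\int_0^T|h_\varepsilon(s)|_0^2 ds\le R$, provides tightness of the controlled solutions in $\mathcal{C}([0,T];H)$. The stochastic term has prefactor $a(\varepsilon)\to 0$ and hence vanishes in $L^2(\Omega;\mathcal{C}([0,T];H))$ by Burkholder--Davis--Gundy and \eqref{uHbound}. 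Using \eqref{(2.8)} and the fact that the original solution $\tilde u^\varepsilon$ driven by the shifted Wiener process still converges to $u^0$ in $L^2(\Omega;\mathcal{C}([0,T];H))$, I can replace $g(s,\tilde u^\varepsilon(s))$ by $g(s,u^0(s))$ modulo a vanishing error. A piecewise constant approximation of $g(s,u^0(s))$ on a mesh (legitimate by \eqref{Holder}) then lets the weak convergence of $h_\varepsilon$ act against a strongly convergent integrand, so the drift converges to $\int_0^\cdot g(s,u^0(s))h(s)\,ds$. Invoking the Skorohod representation theorem and a Gronwall argument on the difference between the controlled solution and the skeleton solution identifies the limit uniquely as $\mathcal{G}(\int_0^\cdot h\,ds)$.

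The main obstacle throughout is the absence of dissipation: the Schr\"odinger operator $i\Delta$ generates a unitary group, so none of the standard parabolic energy gains are available. All of the regularity and compactness must instead be extracted from the bound \eqref{K5} on $\mathcal{U}$, the Lipschitz and growth conditions \eqref{(2.6)}--\eqref{(2.8)} on $g$, and from the fact that $\mathrm{Re}(i\langle Au,u\rangle)=0$, which is what keeps the $H$-norm estimates closed. Controlling the $V$-norm of the controlled solutions uniformly in $\varepsilon$ and $h_\varepsilon\in S_R$ is the most delicate portion and is where condition \eqref{(2.7)} and the Gronwall estimate in $V$ play a decisive role.
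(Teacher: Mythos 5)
Your overall strategy coincides with the paper's: both prove the LDP for $v^{\varepsilon}$ via the Budhiraja--Dupuis--Maroulas criterion of \cite{BDM}, with the same skeleton equation \eqref{Xhcontrolled}, the same shifted controlled equation \eqref{MDPstochastic}, and the Millet--Duan time discretization for the weak-convergence condition. Two points differ or are under-specified. First, for compactness of $\{X^{h}:h\in S_{M}\}$ you propose uniform $H$- and $V$-bounds, $V^{*}$-equicontinuity and Arzel\`a--Ascoli; the paper instead proves that $h\mapsto X^{h}$ is continuous from $S_{M}$ with the weak topology into $\mathcal{C}([0,T];H)$ (Lemma \ref{LEM3}), which is shorter since $S_{M}$ is already weakly compact. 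Your route is viable but note that after extracting an Arzel\`a--Ascoli limit you must still identify it as $X^{h}$ for the weak limit $h$ of a further subsequence of controls, which is essentially the same continuity statement. Second, and more substantively, the step ``weak convergence of $h_{\varepsilon}$ acts against a strongly convergent integrand'' hides the real difficulty: the pairing to be controlled is $\int_0^t\bigl(g(s,u^{0}(s))(h_{\varepsilon}(s)-h(s)),\widetilde{V}^{\varepsilon}(s)\bigr)ds$, whose integrand contains the unknown difference process $\widetilde{V}^{\varepsilon}$, which does not converge strongly a priori (that is the conclusion sought, so the argument as phrased is circular). The paper resolves this by discretizing $\widetilde{V}^{\varepsilon}$ in time as well, which requires the increment estimates of Lemma \ref{tlemma}, $\mathbb{E}\int_0^T\|v_{h}^{\varepsilon}(\psi_n(s))-v_{h}^{\varepsilon}(s)\|^{2}ds\le \widetilde{C}\,2^{-n/2}$, and these in turn rest on the uniform $V$-bounds of Theorem \ref{TH4}. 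Relatedly, your ``analogous energy identity at the $V$-level'' cannot be obtained by applying It\^o's formula directly to $\|v_{h}^{\varepsilon}\|_{V}^{2}$, since the solution is only known to lie in $L^{2}(\Omega\times[0,T];V)$ and $Av_{h}^{\varepsilon}\in V^{*}$; the paper devotes Theorem \ref{TH4} to a Galerkin-projection and Cauchy-sequence argument with the exponential weight $\theta$ precisely to make this rigorous. With these two items supplied, your outline matches the paper's proof.
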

In addition, considering the process
\begin{equation}\label{Zhat}
\hat{Z}^{\varepsilon}(t) = \frac{u^{\varepsilon}(t)-u^{0}(t)}{\sqrt{\varepsilon}},
\end{equation}
we prove the following central limit theorem for $\{u^{\varepsilon}\}_{\varepsilon \in (0,\varepsilon_{0})}$\, .
\begin{theorem}\label{TH3}
Assuming conditions \eqref{(2.6)}-\eqref{K5}, there exists an $\varepsilon_{0}\in (0,1)$, such that the process, $\{\hat{Z}^{\varepsilon}\}_{\varepsilon\in(0,\varepsilon_{0})}$ converges in probability in space $\mathcal{C}([0,T];H)$, as $\varepsilon$ tends to zero, to  $Z^{0}$, the unique variational solution of
\begin{equation}\label{Vequation}
Z^0(t)= - i \hspace{.08cm}\int_{0}^{t} AZ^0(s)ds + \int_{0}^{t} \mathcal{U}(s) Z^0(s)ds+ \int_{0}^{t} g(s,u^{0}(s))dW(s),
\end{equation}
for all $t\in[0,T]$ and a.e. $\omega\in \Omega$.
\end{theorem}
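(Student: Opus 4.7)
The plan is to derive the SPDE satisfied by $\hat Z^\varepsilon$, form the remainder $R^\varepsilon := \hat Z^\varepsilon - Z^0$, and show $\mathbb{E}\sup_{t\in[0,T]}\|R^\varepsilon(t)\|^2 \to 0$ as $\varepsilon\to 0$ via a complex-valued It\^o argument followed by Gronwall's inequality; since $L^2$-convergence implies convergence in probability, this will yield the claim. Subtracting the deterministic equation for $u^0$ from \eqref{(2.2)} and dividing by $\sqrt\varepsilon$ gives
\begin{equation*}
(\hat Z^\varepsilon(t),v) = -i\int_0^t \langle A\hat Z^\varepsilon(s),v\rangle\,ds - \int_0^t (\mathcal{U}(s)\hat Z^\varepsilon(s),v)\,ds + \int_0^t (g(s,u^\varepsilon(s))\,dW(s),v).
\end{equation*}
A preliminary step is the uniform bound $\sup_{\varepsilon\in(0,\varepsilon_0)}\mathbb{E}\sup_{t\in[0,T]}\|\hat Z^\varepsilon(t)\|^2 \leq C$. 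Applying It\^o to $\|\hat Z^\varepsilon\|^2$ and taking real parts, the dispersive term contributes zero because $\langle A\hat Z^\varepsilon,\hat Z^\varepsilon\rangle = \int_G|\nabla \hat Z^\varepsilon|^2\,dx$ is real, so $-i\langle A\hat Z^\varepsilon,\hat Z^\varepsilon\rangle$ is purely imaginary. The potential term is bounded by $2\sqrt{k_5}\|\hat Z^\varepsilon\|^2$ via \eqref{K5}, and the It\^o correction from the noise is controlled by $k_1(1+\|u^\varepsilon\|^2)$ via \eqref{(2.6)}. Combining BDG, \eqref{uHbound} and Gronwall then gives the uniform bound.

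Setting $R^\varepsilon := \hat Z^\varepsilon - Z^0$ (so $R^\varepsilon(0)=0$), the difference of the variational identities reads
\begin{equation*}
dR^\varepsilon = -iAR^\varepsilon\,dt - \mathcal{U}(t)R^\varepsilon\,dt + [g(t,u^\varepsilon(t))-g(t,u^0(t))]\,dW(t).
\end{equation*}
Applying It\^o to $\|R^\varepsilon\|^2$, the same cancellation eliminates the $A$-term, the potential contributes at most $2\sqrt{k_5}\|R^\varepsilon\|^2$, and by \eqref{(2.8)} the It\^o correction is $\leq k_3\int_0^t\|u^\varepsilon-u^0\|^2\,ds = k_3\varepsilon\int_0^t\|\hat Z^\varepsilon(s)\|^2\,ds$. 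Taking $\sup_{s\leq t}$, applying BDG to the martingale term and using Young's inequality to absorb $\tfrac12\mathbb{E}\sup_{s\leq t}\|R^\varepsilon(s)\|^2$ on the left, one arrives at
\begin{equation*}
\mathbb{E}\sup_{s\leq t}\|R^\varepsilon(s)\|^2 \leq C_1\int_0^t \mathbb{E}\sup_{r\leq s}\|R^\varepsilon(r)\|^2\,ds + C_2\,\varepsilon\,\mathbb{E}\sup_{s\leq T}\|\hat Z^\varepsilon(s)\|^2.
\end{equation*}
Gronwall together with the uniform bound on $\hat Z^\varepsilon$ then yields $\mathbb{E}\sup_{t\in[0,T]}\|R^\varepsilon(t)\|^2 = O(\varepsilon)$, which is stronger than the claimed convergence in probability in $\mathcal{C}([0,T];H)$.

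Existence and uniqueness of $Z^0$ as a variational solution to \eqref{Vequation} follows from \cite{(29)} applied to the linear SPDE \eqref{Vequation}, whose deterministic, adapted diffusion coefficient $s\mapsto g(s,u^0(s))$ inherits the growth and Lipschitz bounds \eqref{(2.6)}--\eqref{(2.8)} from $u^0 \in \mathcal{C}([0,T];H)$. The main obstacle is conceptual rather than technical: one must treat the complex-valued It\^o formula carefully so that the skew-adjointness of $-iA$ (equivalently, $\mathrm{Re}[i\langle Au,u\rangle]=0$) is used to discard the dispersive term at every stage; once that cancellation is in place the remaining estimates are routine Lipschitz and linear-growth arguments, powered by the $\sqrt{\varepsilon}$ gained from \eqref{(2.8)}.
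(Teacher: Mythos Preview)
Your proposal is correct and follows essentially the same route as the paper: derive the linear SPDE for $\hat Z^\varepsilon$, obtain a uniform-in-$\varepsilon$ bound on $\mathbb{E}\sup_t\|\hat Z^\varepsilon\|^2$, apply It\^o to $\|\hat Z^\varepsilon - Z^0\|^2$ using the cancellation $\mathrm{Re}[i\langle Au,u\rangle]=0$, control the noise difference via the Lipschitz condition \eqref{(2.8)} to gain the factor $\varepsilon$, and close with BDG plus Gronwall to obtain $\mathbb{E}\sup_t\|\hat Z^\varepsilon - Z^0\|^2 = O(\varepsilon)$. The only cosmetic difference is that the paper writes the diffusion as $\overline g(s,\hat Z^\varepsilon(s)) := g(s,\sqrt\varepsilon\,\hat Z^\varepsilon(s)+u^0(s))$ rather than $g(s,u^\varepsilon(s))$, which is the same object.
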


\section{Moderate Deviations Principle}\label{sec3}

\indent For the weak convergence approach, we let  {$\varepsilon_{0}\in(0,1)$} and consider the process $\{v^{\varepsilon}\}_{\varepsilon \in (0,\varepsilon_{0})}$, defined by \eqref{(2.1)}, where $v^{\varepsilon}$ is the variational solution of
\begin{equation}\label{MDPequation}
v^{\varepsilon}(t) = -i \hspace{.08cm}\int_{0}^{t}  Av^{\varepsilon}(s) ds + \int_{0}^{t} \mathcal{U}(s)v^{\varepsilon}(s) ds + a(\varepsilon) \int_{0}^{t} \hat{g}(s,v^{\varepsilon}(s))dW(s),
\end{equation}
for $t\in[0,T]$, a.e. $\omega\in \Omega$ and  $\hat{g}:[0,T]\times H \to L_2(H_0,H)$ defined by
\begin{equation}
\hat{g}(t, {v} ):= g\left(t, \frac{\sqrt{\varepsilon}}{a(\varepsilon)} {v}
+  u^{0}(t)\right), \ \mbox{ for }  {t\in[0,T], v\in H}.\label{MDPg}
\end{equation}
Following \cite{BDM}, we define for   each $M\ge 0$, 
$$S_{M}:= \left\{h \in L^{2}(0,T;H_{0}): \int_{0}^{T} |h(s)|_{0}^{2}ds \leq M\right\},$$
which under the weak topology, using the metric,
\begin{equation*}
 {d({h_{1},h_{2}})= \sum_{i=1}^{\infty} \frac{1}{2^{i}} \left|\int_{0}^{T} \left( {h_{1}}(s)-{h_{2}}(s), e_{i}(s)\right)_{0}ds\right|, \mbox{ for }h_{1},h_{2}\in S_{M},}
\end{equation*}
 is a compact metric space (see Section 4 of \cite{BDM} or the Appendix in \cite{Budhiraja1}). Moreover, we let 
 \begin{align*}
\mathcal{P}_{2}  :=  \left\{{h:(0,T)\times \Omega\to H_{0}, \, \text{$(\mathcal{F}_{t})_{t\in [0,T]}$-predictable process}:} \int_{0}^{T} |h(s)|_{0}^{2}ds <\infty\  \text{ $P$-a.s.}\right\},
\end{align*}
\begin{align*}
\mathcal{P}_{M} :=  \left\{h\in \mathcal{P}_{2}: h(\omega)\in S_{M}  \ \text{ $P$-a.s.}\right\}.
\end{align*}
Then for $h\in \mathcal{P}_{M}$ we let $v_{h}^{\varepsilon}$ be the variational solution of the following stochastic controlled equation
\begin{eqnarray}
v_{h}^{\varepsilon}(t)  &=&  - i \hspace{.08cm}\int_{0}^{t} Av_{h}^{\varepsilon}(s) ds +  \int_{0}^{t} \mathcal{U}(s)v_{h}^{\varepsilon}(s)ds   + a(\varepsilon) \int_{0}^{t} \hat{g}(s,v_{h}^{\varepsilon}(s))dW(s)\nonumber\\
&& + \int_{0}^{t} \hat{g}(s,v_{h}^{\varepsilon}(s))h(s)ds,  \label{MDPstochastic}
\end{eqnarray}
for $ t\in[0,T]$ and a.e. $\omega\in \Omega$.
The existence and uniqueness of the solutions
\begin{equation}
v^{\varepsilon} ,\ v_{h}^{\varepsilon}\in L^{2}(\Omega;\mathcal{C}([0,T];H))\cap L^{2}(\Omega \times [0,T];V) \label{space}
\end{equation}
of \eqref{MDPequation} and \eqref{MDPstochastic}, respectively, can be proved similar to \cite[Theorem 1]{(29)} (see also the proof of \cite[Theorem 2.1]{(24)}). Furthermore, classical deterministic theory of Schr\"odinger equations may be applied to obtain the well-posedness of solutions to \eqref{Xhcontrolled}, where $X^{h}\in \mathcal{C}([0,T];H) \cap L^{2} {(0,T;V)}$. Note that, in the case  when $h \in \mathcal{P}_{M}$, the properties of $X^h$ hold {$P$-a.s.} We derive for {$1\leq p<\infty$} the following bounds in {Subsections \ref{5.4}, \ref{5.5}  and \ref{5.6} of} the Appendix:
\begin{equation}
\mathbb{E} \sup_{0\leq t\leq T} \|v^{\varepsilon}(t)\|^{2p}\leq a^{2}(\varepsilon)   {\bar N}_{2p}(T),\label{vk}
\end{equation}
\begin{equation}
\mathbb{E} \sup_{0\leq t\leq T} \|v_{h}^{\varepsilon}(t)\|^{2p}\leq \widetilde{N}_{2p}(T, M),\label{vh2pinequality}
\end{equation}
\begin{equation}
\sup_{0\leq t\leq T} \|X^{h}(t)\|^{2p}\leq \hat{N}_{2p}(T, M), \label{Xhinequality}
\end{equation}
{where $\bar N_{2p}(T)$, $\widetilde{N}_{2p} (T, M)$ and $\hat{N}_{2p}(T, M)$ are positive constants defined in \eqref{veps},
 \eqref{A.11} and \eqref{A.12}, respectively.}\\
 \indent {For the following, we denote the identity operator on $H$ by $I$ and let $L_2:=L_2(H_0,H)$.} Within the proofs of our statements, $C_i$, $i\in\{1,2,3,...\}$, represents a generic positive constant, the value of which may vary from line to line.

In the  papers \cite{Bessaih, Millet, Duan} the authors use Galerkin approximations, i.e. solutions of the corresponding stochastic finite dimensional equations, to obtain the estimates which involve the norm in the space $V$, while in our paper we use another method, namely certain Cauchy sequences in the space $L^2(\Omega, C([0,T];V))$ and finite dimensional projections, $\pi_n v_h^\varepsilon$, see   \eqref{proj}, of the  process $v_h^\varepsilon$ solution of \eqref{MDPstochastic} (analogously for $v^{\varepsilon}$ and $X^{h}$). The method is presented in the theorem below, and its result is crucial for the proof of Lemma \ref{tlemma} and  Lemma \ref{LEM2}, which are used to prove the conditions of MDP. We also point out that we consider variational solutions, whereas in \cite{Bessaih, Millet, Duan} weak solutions are considered.

\begin{theorem} \label{TH4}
{Let  $ \varepsilon\in(0,1) $ as in \eqref{epsi} and $h\in \mathcal{P}_{M}$.} The unique variational solutions, $v^{\varepsilon} $, $v_{h}^{\varepsilon},$ and $X^h$ to \eqref{MDPequation}, \eqref{MDPstochastic}, and \eqref{Xhcontrolled}, respectively, satisfy for positive constants $\bar K (T)$, $\widetilde{K} (T, M)$ and $\hat{K} (T,M)$,
\begin{align}
\mathbb{E}\sup_{0\leq t\leq T} \|v^{\varepsilon}(t)\|_{V}^{2} &\leq  \bar K (T), \label{vVnorm}\\
\mathbb{E} \sup_{0\leq t\leq T} \|v^{\varepsilon}_{h}(t)\|_{V}^{2} &\leq  \widetilde{K} (T,M), \label{vhVnorm}\\
\sup_{0\leq t\leq T} \|X^{h}(t)\|_{V}^{2} &\leq  \hat{K} (T,M)\,. \label{XhVnorm}
\end{align}
\end{theorem}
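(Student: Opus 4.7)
My plan is to apply It\^o's formula to $\|\cdot\|_V^2=\langle A\cdot,\cdot\rangle$ for each of the three processes, exploit the skew-adjointness of $-iA$ to annihilate the Laplacian contribution, absorb the potential term via \eqref{K5}, and close the resulting inequalities through the Burkholder--Davis--Gundy (BDG) inequality and Gronwall's lemma.

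For $v^\varepsilon$, applying It\^o in $V$ (justified by the regularity $v^\varepsilon\in L^2(\Omega;\mathcal{C}([0,T];H))\cap L^2(\Omega\times[0,T];V)$ already used for \eqref{vk} and by \eqref{(2.7)}, after a standard Galerkin regularization) yields
\[
d\|v^\varepsilon(s)\|_V^2 = 2\,\mathrm{Re}\,\langle dv^\varepsilon(s),Av^\varepsilon(s)\rangle + a^2(\varepsilon)\,\|\hat g(s,v^\varepsilon(s))\|_{L_2(H_0,V)}^2\,ds.
\]
The Schr\"odinger drift contributes $2\,\mathrm{Re}\,\langle -iAv^\varepsilon,Av^\varepsilon\rangle = -2\,\mathrm{Im}\,\langle Av^\varepsilon,Av^\varepsilon\rangle = -2\,\mathrm{Im}\,\|v^\varepsilon\|_V^2 = 0$, so the Laplacian is inert. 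The potential drift $-2\,\mathrm{Re}\,\langle \mathcal U v^\varepsilon,Av^\varepsilon\rangle$, after an integration by parts (equivalently, writing $\langle Av^\varepsilon,\mathcal U v^\varepsilon\rangle=\int_G \nabla v^\varepsilon\,\overline{\nabla(\mathcal U v^\varepsilon)}\,dx$), produces terms in $\mathcal U$ and $\partial_{x_j}\mathcal U$, which by \eqref{K5} and Cauchy--Schwarz are dominated by $C(\|v^\varepsilon\|^2+\|v^\varepsilon\|_V^2)$. The It\^o correction is estimated through \eqref{(2.7)} and \eqref{MDPg}: since $\hat g$ is evaluated at $\tfrac{\sqrt\varepsilon}{a(\varepsilon)}v^\varepsilon+u^0$, expanding the squared $V$-norm and invoking \eqref{epsi} to absorb $\varepsilon/a^2(\varepsilon)\le 1$ gives a bound of the form $C(1+\varepsilon\|v^\varepsilon\|_V^2+\|u^0\|_V^2)$, where $u^0\in\mathcal{C}([0,T];V)$ follows from the classical deterministic Schr\"odinger theory applied to the $\varepsilon=0$ equation with $\varphi\in V$ and $\mathcal U$ bounded. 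BDG applied to the martingale part together with Young's inequality absorbs a $\tfrac12\,\mathbb E\sup_{s\le t}\|v^\varepsilon(s)\|_V^2$ term to the left-hand side, producing a Gronwall-ready inequality from which \eqref{vVnorm} follows.

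The bounds \eqref{vhVnorm} and \eqref{XhVnorm} follow by the same template applied to \eqref{MDPstochastic} and \eqref{Xhcontrolled}; the only new term is the controlled drift $2\,\mathrm{Re}\,\langle \hat g(s,v_h^\varepsilon)h(s),Av_h^\varepsilon\rangle$ (resp.\ $2\,\mathrm{Re}\,\langle g(s,u^0(s))h(s),AX^h(s)\rangle$), which by Cauchy--Schwarz in $V$ is dominated by $\|\hat g\|_{L_2(H_0,V)}|h(s)|_0\|v_h^\varepsilon\|_V$ and then handled by Young's inequality. Since $h\in\mathcal P_M$ (resp.\ $S_M$) forces $\int_0^T|h(s)|_0^2\,ds\le M$, the resulting coefficient of $\|v_h^\varepsilon\|_V^2$ is $L^1$-in-time (a.s.) and Gronwall still closes, with constants now depending on $M$.

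The main obstacle is the It\^o step in the $V$-norm itself: because $-iA$ is only skew-adjoint, the usual coercive energy-dissipation mechanism for parabolic SPDEs is absent, and the argument must instead rely on the exact Schr\"odinger cancellation identity above, together with careful bookkeeping of the rescaling $\sqrt\varepsilon/a(\varepsilon)$ inside $\hat g$, which is exactly balanced by the prefactor $a^2(\varepsilon)$ on the noise and by the regime \eqref{epsi}.
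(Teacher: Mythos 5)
Your overall strategy (It\^o's formula for $\|\cdot\|_V^2$, the Schr\"odinger cancellation coming from $\mathrm{Im}\,\|\cdot\|_V^2=0$, condition \eqref{K5} for the potential, \eqref{(2.7)} for the It\^o correction, Burkholder--Davis--Gundy and Gronwall) is the same as the paper's, but the argument as written has a genuine gap in the Gronwall step for \eqref{vhVnorm}. After Young's inequality the controlled drift produces a term of the form $\mathbb{E}\int_0^t(1+|h(s)|_0^2)\,\|v_h^\varepsilon(s)\|_V^2\,ds$ on the right-hand side of the inequality for $\mathbb{E}\sup_{s\le t}\|v_h^\varepsilon(s)\|_V^2$. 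Since $h\in\mathcal P_M$ is a random (merely predictable) control, $|h(\cdot)|_0^2$ is a random coefficient: you cannot pull the expectation inside and invoke the deterministic Gronwall lemma, and the crude bound $\mathbb{E}\bigl[\sup_s\|v_h^\varepsilon(s)\|_V^2\int_0^T(1+|h|_0^2)ds\bigr]\le (T+M)\,\mathbb{E}\sup_s\|v_h^\varepsilon(s)\|_V^2$ cannot be absorbed into the left-hand side because $(T+M)$ times the constant is not less than one. ``The coefficient is $L^1$ in time a.s.'' suffices for the pathwise, deterministic estimate \eqref{XhVnorm}, but not after taking $\mathbb{E}\sup$. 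The paper resolves this by weighting with $\theta(t)=\exp\bigl(-\int_0^t(1+|h(s)|_0^2)ds\bigr)$ before applying It\^o, so that the problematic term appears with a negative sign and cancels; the bound $\int_0^T|h|_0^2ds\le M$ is used only at the end to remove the weight at the cost of a factor $e^{T+M}$. You need this device, or an equivalent stochastic Gronwall lemma, to close \eqref{vhVnorm}; for \eqref{vVnorm} there is no $h$ and your argument is fine.

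A second, lesser issue is that ``after a standard Galerkin regularization'' conceals most of the paper's actual work. A priori $v_h^\varepsilon$ lies only in $L^2(\Omega\times[0,T];V)\cap L^2(\Omega;\mathcal{C}([0,T];H))$, so $\sup_t\|v_h^\varepsilon(t)\|_V^2$ is not even known to be finite, and It\^o's formula for $\|\cdot\|_V^2$ is not directly applicable (it would require $D(A)$-valued paths). The paper therefore applies It\^o to the spectral projections $\pi_nv_h^\varepsilon$, where the cancellation $\mathrm{Im}\sum_k\mu_k^2|(v,h_k)|^2=0$ is exact, derives a bound uniform in $n$, proves that $\{\pi_nv_h^\varepsilon\}_n$ is Cauchy in $L^2(\Omega;\mathcal{C}([0,T];V))$, and identifies the limit with $v_h^\varepsilon$ through the convergence in $H$. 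Your outline is compatible with this, but the limit-identification step is a necessary part of the proof rather than a routine remark.
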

\begin{proof}
We focus on proving \eqref{vhVnorm} and note that the proofs for \eqref{vVnorm} and  \eqref{XhVnorm} are similar. As in \cite[proof of Theorem 1, inequality (18)]{(29)} we can show that there exists  $k(T,M)>0$ such that
\begin{equation}\label{vVM}
 \mathbb{E} \int_{0}^{T}  \|v_{h}^{\varepsilon}(s)\|_{V}^{2}  ds\le   k(T,M).
\end{equation}
For  $n\in{\mathbb N}$ recall that   $\pi_n v_{h}^{\varepsilon}$  is the finite dimensional projection of $v_{h}^{\varepsilon}$, see  \eqref{proj}  in Subsection \ref{5.1} in the Appendix.  \\
\textbf{Step 1:} We prove that \begin{equation}
\lim_{n\rightarrow \infty} \mathbb{E} \sup_{0\leq t\leq T}  \| (I-\pi_{n})v_{h}^{\varepsilon}(t)\|^{2}  =0. \label{conv1}
\end{equation}
The sequence of eigenfunctions of $A$, $\{\varphi_{k}\}_{k\in \mathbb{N}}$, is an orthonormal basis in $H$ (see Subsection \ref{5.1}). By using \eqref{MDPstochastic}  and the It\^o formula we have
\begin{align}\label{new*9}
&|(v_{h}^{\varepsilon}(t), \varphi_{k})|^{2} =    2  \text{Im} \int_{0}^{t}(Av_{h}^{\varepsilon}(s), \varphi_{k})(\overline{v_{h}^{\varepsilon}(s), \varphi_{k}})ds + 2 \text{Re} \int_{0}^{t} \left(\mathcal{U}(s)v_{h}^{\varepsilon}(s), \varphi_{k}\right) (\overline{v_{h}^{\varepsilon}(s),\varphi_{k}})ds \nonumber\\
& + 2a(\varepsilon) \text{Re} \int_{0}^{t} \left(\hat{g}(s,v_{h}^{\varepsilon}(s)) dW(s), \varphi_{k}\right) (\overline{v_{h}^{\varepsilon}(s),\varphi_{k}}) + a^2(\varepsilon) \int_{0}^{t} \sum_{j=1}^\infty |(\hat g(s,v_{h}^{\varepsilon}(s))Q^{\frac12}e_j,\varphi_k )|^2ds\nonumber\\
&+ 2  \text{Re} \int_{0}^{t} \left(\hat{g}(s,v_{h}^{\varepsilon}(s))h(s), \varphi_{k}\right) (\overline{v_{h}^{\varepsilon}(s),\varphi_{k}})ds ,
\end{align}
for all $t\in[0,T]$, and a.e. $\omega\in\Omega$.
We sum up from $k=1$ to $n\in{\mathbb N}$ and then by   \eqref{opA} and \eqref{new*00}  we obtain for each   $n\in \mathbb{N}$,  every $t\in[0,T]$, and a.e. $\omega\in\Omega$
\begin{align} \label{new*2}
&\|\pi_n v_{h}^{\varepsilon}(t)\|^{2} =    2  \text{Re} \int_{0}^{t} \left(\mathcal{U}(s)v_{h}^{\varepsilon}(s), \pi_n v_{h}^{\varepsilon}(s)\right) ds + 2a(\varepsilon) \text{Re} \int_{0}^{t} \left(\hat{g}(s,v_{h}^{\varepsilon}(s)) dW(s), \pi_n v_{h}^{\varepsilon}(s)\right) \nonumber  \\
&+ a^2(\varepsilon) \int_{0}^{t} \sum_{j=1}^\infty  \|\pi_n \hat g(s,v_{h}^{\varepsilon}(s))Q^{\frac12}e_j\|^2ds + 2   \text{Re} \int_{0}^{t} \left(\hat{g}(s,v_{h}^{\varepsilon}(s))h(s), \pi_n v_{h}^{\varepsilon}(s)\right)  ds .
\end{align}
On the other hand, by \eqref{MDPstochastic},  It\^o's formula  and \eqref{opA}  we have for
all $t\in[0,T]$  and a.e. $\omega\in\Omega$
\begin{align} \label{new*3}
&\| v_{h}^{\varepsilon}(t)\|^{2} =    2 \text{Re} \int_{0}^{t} \left(\mathcal{U}(s)v_{h}^{\varepsilon}(s),   v_{h}^{\varepsilon}(s)\right) ds + 2a(\varepsilon) \text{Re} \int_{0}^{t} \left(\hat{g}(s,v_{h}^{\varepsilon}(s)) dW(s),   v_{h}^{\varepsilon}(s)\right) \nonumber  \\
&+ a^2(\varepsilon) \int_{0}^{t} \sum_{j=1}^\infty  \|  \hat g(s,v_{h}^{\varepsilon}(s))Q^{\frac12}e_j\|^2ds + 2  \text{Re} \int_{0}^{t} \left(\hat{g}(s,v_{h}^{\varepsilon}(s))h(s),   v_{h}^{\varepsilon}(s)\right)  ds .
\end{align}
   We substract \eqref{new*2} from \eqref{new*3} and use \eqref{new*1} to obtain
for each   $n\in \mathbb{N}$,  every $t\in[0,T]$, and a.e. $\omega\in\Omega$
\begin{align*} 
& \|(I-\pi_n) v_{h}^{\varepsilon}(t)\|^{2} =     2  \text{Re} \int_{0}^{t} \left( \mathcal{U}(s)v_{h}^{\varepsilon}(s)+\hat{g}(s,v_{h}^{\varepsilon}(s))h(s),(I- \pi_n) v_{h}^{\varepsilon}(s)\right) ds
\nonumber\\& + 2a(\varepsilon) \text{Re} \int_{0}^{t} \left(\hat{g}(s,v_{h}^{\varepsilon}(s)) dW(s), (I-\pi_n) v_{h}^{\varepsilon}(s)\right)
 + a^2(\varepsilon) \int_{0}^{t} \sum_{j=1}^\infty   \|(I-\pi_n)\hat  g(s,v_{h}^{\varepsilon}(s))Q^{\frac12} e_j\|^2ds     .
\end{align*}
Taking the supremum on time, afterwards expectations and using  $ a(\varepsilon)\in(0,1)$, see \eqref{epsi}, we arrive at
\begin{align*}
& \mathbb{E}\sup_{0\leq t\leq T}   \|(I - \pi_{n} ) v_{h}^{\varepsilon}(t)\|^{2} \le
    2\mathbb{E} \int_{0}^{T} \big| (  \mathcal{U}(s) v_{h}^{\varepsilon}(s) + \hat{g}(s, v_{h}^{\varepsilon}(s))h(s) ,
		(I- \pi_{n})   v_{h}^{\varepsilon}(s))\big| ds \\
& +2   \mathbb{E}\sup_{0\leq t\leq T} \Big|\int_{0}^{t}  \left( \hat{g}(s, v_{h}^{\varepsilon}(s))dW, (I-\pi_{n})v_{h}^{\varepsilon}(s)\right)\Big|
 +  \! \mathbb{E} \!\int_{0}^{t} \!    \sum_{j=1}^\infty   \|(I-\pi_n)\hat  g(s,v_{h}^{\varepsilon}(s))Q^{\frac12} e_j\|^2ds.
\end{align*}
Applying the Burkholder-Davis-Gundy inequality and \eqref{new*0}, we obtain
\begin{align*}
&    2\mathbb{E} \sup_{0\leq t\leq T}\Big|   \int_{0}^{t}  \left( \hat{g}(s, v_{h}^{\varepsilon}(s))dW, (I-\pi_{n})v_{h}^{\varepsilon}(s)\right)\Big|\\
&\leq  C_1   \mathbb{E} \Big|\int_{0}^{T}\sum_{j=1}^\infty  \left( (I-\pi_{n}) \hat{g}(s,v_{h}^{\varepsilon}(s))Q^{\frac12}e_j, (I-\pi_{n}) v_{h}^{\varepsilon}(s)\right)^{2}ds\Big|^{\frac12}\\
&\leq  \frac{1}{2}   \mathbb{E} \sup_{0\leq t\leq T}  \left\| (I-  \pi_{n}) v_{h}^{\varepsilon}(t)\right\|^{2} + \frac{C_{1}^2 }{2}    \mathbb{E} \int_{0}^{T}     \sum_{j=1}^\infty   \|(I-\pi_n)\hat  g(s,v_{h}^{\varepsilon}(s))Q^{\frac12} e_j\|^2ds.\nonumber
\end{align*}
Combining the above estimates we have
\begin{flalign}
&\mathbb{E}\sup_{0\leq t\leq T} \|(I-  \pi_{n}) v_{h}^{\varepsilon}(t)\|^{2} \le  C_2\Big(
\mathbb{E} \int_{0}^{T}  \big|  (  \mathcal{U}(s) v_{h}^{\varepsilon}(s)+\hat{g}(s, v_{h}^{\varepsilon}(s)) h(s),(I- \pi_{n})   v_{h}^{\varepsilon}(s) )\big| ds \nonumber \\
&  +    \mathbb{E} \int_{0}^{T}     \sum_{j=1}^\infty   \|(I-\pi_n)\hat  g(s,v_{h}^{\varepsilon}(s))Q^{\frac12} e_j\|^2ds\Big). \label{new}
\end{flalign}
Note that as in \eqref{new*1} it holds
\begin{align*}
  \mathbb{E} \!\!\int_{0}^{T}\!\!  \left\| (I-\pi_{n}) v_{h}^{\varepsilon}(s)\right\|^{2}ds  = \sum_{k= n+1}^\infty\mathbb{E} \!\!\int_{0}^{T}\!\! |(   v_{h}^{\varepsilon}(s),\varphi_k)|^2ds
 \leq  \mathbb{E} \int_{0}^{T}\!\! \| v^{\varepsilon}_{h}(s)\|^{2}ds  .
\end{align*}
By   \eqref{MDPg}, \eqref{new*1},  \eqref{(2.6)}, \eqref{epsi}, \eqref{vh2pinequality}, and \eqref{u0} we get
\begin{align*}
 & \mathbb{E} \int_{0}^{T}     \sum_{j=1}^\infty   \|(I-\pi_n)\hat  g(s,v_{h}^{\varepsilon}(s))Q^{\frac12} e_j\|^2  ds =  \sum_{k= n+1}^\infty	\mathbb{E} \int_{0}^{T}     \sum_{j=1}^\infty    |(  \hat  g(s,v_{h}^{\varepsilon}(s))Q^{\frac12} e_j,\varphi_k)|^2  ds\\
 & \le \mathbb{E} \int_{0}^{T}  \|\hat{g}(s,v_{h}^{\varepsilon}(s))\|^2_{L_2} ds
  \le k_1   \Big(T+\mathbb{E} \int_0^T\! 2\big(    \|v_{h}^{\varepsilon}(s)\|^{2}+   \|u^{0}(s)\|^{2}\big)ds\Big)  .
	\end{align*}
Using the results from Subsection \ref{5.2}  we arrive at
\begin{align}
\lim_{n\rightarrow \infty}   \mathbb{E} \int_{0}^{T} \Big(\|(I- \pi_{n})   v_{h}^{\varepsilon}(s) \|^2+    \sum_{j=1}^\infty   \|(I-\pi_n)\hat  g(s,v_{h}^{\varepsilon}(s))Q^{\frac12} e_j\|^2 \Big)ds=0. \label{new*8}
\end{align}
By the boundedness property of $\mathcal{U} $ given in \eqref{K5} and by   the Cauchy-Schwarz inequality we write
\begin{flalign*}
  \mathbb{E} \int_{0}^{T}\!\!\!  |(  \mathcal{U} (s) v_{h}^{\varepsilon}(s) , (I- \pi_{n})   v_{h}^{\varepsilon}(s))|ds \! \le\! \sqrt{k_6}\Big(  \mathbb{E} \int_0^T \| v^{\varepsilon}_{h}(s)\|^{2} ds \Big)^{\frac12}\!\Big(\mathbb{E} \!\!\int_{0}^{T}\!\!  \left\| (I-\pi_{n}) v_{h}^{\varepsilon}(s)\right\|^{2}ds\Big)^{\frac12}.
\end{flalign*}
 Similarly  we have by using \eqref{MDPg}, \eqref{(2.6)},   \eqref{epsi},   and $h\in \mathcal{P}_{M}$ that
\begin{flalign*}  &   \mathbb{E} \int_{0}^{T}  \big|(\hat{g}(s,v_{h}^{\varepsilon}(s)) h(s) ,(I- \pi_{n})   v_{h}^{\varepsilon}(s))\big| ds \\
&\le \Big(\mathbb{E} \int_{0}^{T}  \|\hat{g}(s,v_{h}^{\varepsilon}(s))\|^2_{L_2}| h(s)|_0^{2}ds\Big)^{\frac12}\Big(\mathbb{E} \!\!\int_{0}^{T}\!\!  \left\| (I-\pi_{n}) v_{h}^{\varepsilon}(s)\right\|^{2}ds\Big)^{\frac12}\\
& \le  \Big(k_1 M \Big(1+2\mathbb{E} \sup_{s\in[{0},{T}]}     \|v_{h}^{\varepsilon}(s)\|^{2}+ 2\sup_{s\in[{0},{T}]}  \|u^{0}(s)\|^{2}\Big)\Big)^{\frac12}\Big(\mathbb{E} \!\!\int_{0}^{T}\!\!  \left\| (I-\pi_{n}) v_{h}^{\varepsilon}(s)\right\|^{2}ds\Big)^{\frac12} .
\end{flalign*}
Thus, by \eqref{new} and the  inequalities, \eqref{vh2pinequality}, \eqref{u0}, and \eqref{new*8}     it follows that
\eqref{conv1} holds.\\
 \textbf{Step 2:} We  show that there exists a positive constant  $\widetilde  K(T,M)$ such that
\begin{equation}\label{import00}
\mathbb{E} \sup_{0\leq t\leq T} \| \pi_{n}v_{h}^{\varepsilon}(t)\|^{2}_V \leq
  \widetilde K(T,M), \mbox{ for each }n\in{\mathbb N}.
\end{equation}
Multiplying both sides of \eqref{new*9} by $\mu_{k}$ (see in Subsection \ref{5.1}  the notation for the eigenvalues of $A$) and summing from $k=1$ to $n$, where $n  \in \mathbb{N}$, we have
\begin{align}
   & \|  \pi_{n} v_{h}^{\varepsilon}(t)\|_V^{2} =   2  \text{Re}   \int_{0}^{t} \left(  \mathcal{U}(s)v_{h}^{\varepsilon}(s)+\hat{g}(s,v_{h}^{\varepsilon}(s))h(s),A   \pi_{n} v_{h}^{\varepsilon}(s) \right)ds \nonumber\\
&+ 2  a(\varepsilon) \text{Re} \int_{0}^{t} \left(  \hat{g}(s,v_{h}^{\varepsilon}(s))dW(s), A   \pi_{n} v_{h}^{\varepsilon}(s)\right)
  +a^2(\varepsilon) \int_{0}^{t} \sum_{j=1}^\infty \|  \pi_n \hat g(s,v_{h}^{\varepsilon}(s))Q^{\frac12}e_j \|^2_Vds. \label{new*13}
	\end{align}
	Denote
\begin{equation} \label{theta0}
\theta(t)=\exp\Big(-\displaystyle  \int_0^t(1+ |h(s) |^{2}_0) ds\Big), \mbox{ for each } t\in[0,T].
\end{equation}
Note that $0<\theta(T)\le \theta(t)\le 1$ for each $t\in[0,T]$. By \eqref{new*13}, \eqref{K5},  \eqref{ineq} and \eqref{epsi} we write
\begin{align*}
   & \theta(t)\|  \pi_{n} v_{h}^{\varepsilon}(t)\|_V^{2}=
		 2 \text{Re}  \!\int_{0}^{t}  \theta(s)( \mathcal{U}(s)v_{h}^{\varepsilon}(s) +\hat{g}(s,v_{h}^{\varepsilon}(s))h(s), A\pi_{n} v_{h}^{\varepsilon}(s) )ds\\
&+ 2  a(\varepsilon) \text{Re}\!\! \int_{0}^{t} \theta(s) \left(  \hat{g}(s,v_{h}^{\varepsilon}(s))dW(s), A   \pi_{n} v_{h}^{\varepsilon}(s)\right)
  +a^2(\varepsilon)\!\! \int_{0}^{t} \!\!\sum_{j=1}^\infty \theta(s) \|  \pi_n \hat g(s,v_{h}^{\varepsilon}(s))Q^{\frac12}e_j \|^2_Vds
	\\&- \int_0^t \theta(s)\big(1+ |h(s) |^{2}_0\big)\|  \pi_{n} v_{h}^{\varepsilon}(s)\|_V^{2}ds\\
& \le \! C_3\int_{0}^{t} \!\! \theta(s)\!\big( \| v_{h}^{\varepsilon}(s)\|^2_V +\|\hat{g}(s,v_{h}^{\varepsilon}(s))\|^2_{L_2(H_0,V)}\big)ds
 + 2  \Big| \!\int_{0}^{t}\!\!\theta(s) \!\left(  \hat{g}(s,v_{h}^{\varepsilon}(s))dW(s), A   \pi_{n} v_{h}^{\varepsilon}(s)\right)\Big|.
	\end{align*}
Using the Burkholder-Davis-Gundy inequality and \eqref{ineq} we compute
\begin{flalign*}
&  2 \mathbb{E} \sup_{0\leq t\leq T}\Big|   \int_{0}^{t} \theta(s)\left( \hat{g}(s, v_{h}^{\varepsilon}(s))dW, A \pi_{n} v_{h}^{\varepsilon}(s)\right)\Big|&\\
&\leq  C_1  \mathbb{E} \Big|\int_{0}^{T}  \sum_{j=1}^\infty \theta^2(s) \left(  \hat{g}(s,v_{h}^{\varepsilon}(s))Q^{\frac12}e_j, A \pi_{n}  v_{h}^{\varepsilon}(s)\right)^{2}ds\Big|^{\frac12}&\\
&\leq  \frac{1}{2}  \mathbb{E} \sup_{0\leq t\leq T}\theta(t) \| \pi_{n} v_{h}^{\varepsilon}(t)\|^{2}_V  +\frac{C_1^2}{2}  \mathbb{E} \int_{0}^{T}
  \theta(s) \|  \hat  g(s,v_{h}^{\varepsilon}(s)) \|^2_{L_2(H_0,V)} ds.
\end{flalign*}
Then we get the estimate
\begin{align*}
&\mathbb{E}\sup_{0\leq t\leq T}\theta(t)  \| \pi_{n}  v_{h}^{\varepsilon}(t)\|^{2}_V \le  C_4 \hspace{.08cm}\mathbb{E} \int_{0}^{T}  \!\! \theta(s)\!\big( \| v_{h}^{\varepsilon}(s)\|^2_V + \|\hat{g}(s,v_{h}^{\varepsilon}(s))\|^2_{L_2(H_0,V)}\big)ds.
\end{align*}
By \eqref{(2.7)} and \eqref{epsi}
\begin{align*}
 \mathbb{E}\sup_{0\leq t\leq T}\theta(t)  \| \pi_{n}  v_{h}^{\varepsilon}(t)\|^{2}_V \le    C_5 \Big( \mathbb{E} \int_{0}^{T} \big(1+\|v_{h}^{\varepsilon}(s)\|_{V}^{2}   +  {\|u^{0}(s)\|^{2}_V}\big)ds\Big)    .
\end{align*}
\eqref{theta0} and $h\in {\mathcal P}_M$ yield
\begin{align*}
&\mathbb{E}\sup_{0\leq t\leq T}  \| \pi_{n}  v_{h}^{\varepsilon}(t)\|^{2}_V \le    C_5\hspace{.08cm} e^{T+ M }\Big( \mathbb{E} \int_{0}^{T} \big(1+\|v_{h}^{\varepsilon}(s)\|_{V}^{2}   + {\|u^{0}(s)\|^{2}_V}\big)ds\Big)    .
\end{align*}
By    \eqref{vVM} and \eqref{uV0}  we have that \eqref{import00} holds with
$$\tilde K(T,M)= C_5\hspace{.08cm} e^{T  +M }\Big(  T +k(T,M)  +   N(T)\|\gamma\|^2_V \Big).$$\\
{\textbf{Step 3:} We  show that $\{\pi_{n}v_{h}^{\varepsilon} \}_{n}$ is a Cauchy sequence in   $L^{2}(\Omega;\mathcal{C}([0,T];V))$.}\\
  By using \eqref{new*13}, \eqref{theta0}, \eqref{new*00} and \eqref{equality}
we have for all $n,m\in{\mathbb N}, n>m$, every $t\in[0,T]$ and a.e. $\omega\in\Omega$
\begin{align*}
   & \theta(t)\|  (\pi_{n}-\pi_{m})v_{h}^{\varepsilon}(t)\|_V^{2} =   2  \text{Re}   \int_{0}^{t} \theta(s)\left(  \mathcal{U}(s)v_{h}^{\varepsilon}(s)+\hat{g}(s,v_{h}^{\varepsilon}(s))h(s),A  (\pi_{n}-\pi_{m})v_{h}^{\varepsilon}(s) \right)ds\\
&+ 2  a(\varepsilon) \text{Re} \int_{0}^{t} \theta(s) \left(  \hat{g}(s,v_{h}^{\varepsilon}(s))dW(s), A  (\pi_{n}-\pi_{m})v_{h}^{\varepsilon}(s)\right)
  \\&+a^2(\varepsilon) \int_{0}^{t} \sum_{j=1}^\infty \theta(s) \| (\pi_n-\pi_m)\hat g(s,v_{h}^{\varepsilon}(s))Q^{\frac12}e_j \|^2_Vds
	\\&- \int_0^t \theta(s) \big(1+  |h(s) |^{2}_0 \big)\| (\pi_n- \pi_m) v_{h}^{\varepsilon}(s)\|_V^{2}ds\\
	&\le  \int_{0}^{t}\| (\pi_{n}-\pi_{m}) \mathcal{U}(s)v_{h}^{\varepsilon}(s)\|^2_Vds+
	  \int_{0}^{t} \|\hat{g}(s,v_{h}^{\varepsilon}(s))-\hat{g}(s,\pi_nv_{h}^{\varepsilon}(s))\|^2_{L_2(H_0,V)}ds\\&+2   \int_{0}^{t}\|\hat{g}(s,\pi_nv_{h}^{\varepsilon}(s))\|_{L_2(H_0,V)} |h(s)|_0\|(\pi_{n}-\pi_{m})v_{h}^{\varepsilon}(s) \|_V ds
	\\&+ 2  a(\varepsilon) \text{Re} \int_{0}^{t} \theta(s) \left(  \hat{g}(s,v_{h}^{\varepsilon}(s))dW(s), A  (\pi_{n}-\pi_{m})v_{h}^{\varepsilon}(s)\right)
  \\&+a^2(\varepsilon) \int_{0}^{t} \sum_{j=1}^\infty \theta(s) \| (\pi_n-\pi_m)\hat g(s,v_{h}^{\varepsilon}(s))Q^{\frac12}e_j \|^2_Vds
	.
\end{align*}
After applying the Burkholder-Davis-Gundy inequality and using \eqref{epsi}, \eqref{K4}, \eqref{(2.7)}, \eqref{theta0} we obtain
\begin{align}
&\mathbb{E}\sup_{0\leq t\leq T} \theta(t)\|(\pi_{n}-\pi_m) v_{h}^{\varepsilon}(t)\|^{2}_V \le C_5\Big(
\mathbb{E} \int_{0}^{T} \| (\pi_{n}-\pi_{m}) \mathcal{U}(s)v_{h}^{\varepsilon}(s)\|^2_Vds
\nonumber \\
&+	 \mathbb{E} \int_{0}^{T} \| (I-  \pi_n)v_{h}^{\varepsilon}(s) \|^2_{ V }ds  + \mathbb{E} \int_{0}^{T}
\sum_{j=1}^\infty   \|(\pi_n-\pi_m)\hat  g(s,v_{h}^{\varepsilon}(s))Q^{\frac12} e_j\|^2_V ds\nonumber \\ & +    \int_{0}^{T}\|\hat{g}(s,\pi_nv_{h}^{\varepsilon}(s))\|_{L_2(H_0,V)} |h(s)|_0\|(\pi_{n}-\pi_{m})v_{h}^{\varepsilon}(s) \|_V ds  \Big). \label{new2}
\end{align}
By the boundedness property \eqref{K5} of $\mathcal{U} $, \eqref{ineq} and by \eqref{vVM}  we have
\begin{align*}
  \mathbb{E} \int_{0}^{T} \left\| (\pi_{n}-\pi_{m}) \mathcal{U}(s)v_{h}^{\varepsilon}(s)\right\|^{2}_Vds \leq  \mathbb{E}\int_{0}^{T} \| \mathcal{U}(s)v_{h}^{\varepsilon}(s)\|^{2}_Vds \leq C_6 k(T,M).
\end{align*}
Applying \eqref{new*1} and \eqref{vVM} we obtain
$$\mathbb{E} \int_{0}^{T} \| (\pi_n-\pi_m)v_{h}^{\varepsilon}(s) \|^2_{ V }ds\le
\mathbb{E} \int_{0}^{T} \| (I-  \pi_n)v_{h}^{\varepsilon}(s) \|^2_{ V }ds\le \mathbb{E} \int_{0}^{T} \|  v_{h}^{\varepsilon}(s) \|^2_{ V }ds\le k(T,M).$$
 By \eqref{(2.7)}, \eqref{epsi}, \eqref{vVM} and \eqref{uV0} we write
\begin{align*}
&\mathbb{E} \int_{0}^{T}
\sum_{j=1}^\infty   \|(\pi_n-\pi_m)\hat  g(s,v_{h}^{\varepsilon}(s))Q^{\frac12} e_j\|^2_V ds\leq \mathbb{E} \int_{0}^{T} \| \hat{g}(s,v_{h}^{\varepsilon}(s))\|_{L_{2}(H_0,V)}^{2}ds \\ &\leq k_2  \mathbb{E} \int_{0}^{T} \Big(1+  2\|v_{h}^{\varepsilon}(s)\|^{2}_V+ 2\|u^{0}(s)\|^{2}_V\Big)ds \le k_2\big(T+2k(T,M)+2N(T)\|\gamma\|^2_V\big)
 .\end{align*}
The properties   for convergent series yield (see Subsection \ref{5.2})
\begin{align*}
&\lim \limits_{\substack{m \to  \infty\\n \to \infty}} \mathbb{E}\!\!\int_{0}^{T}
\!\!\! \big(\|(\pi_{n}-\pi_{m})\mathcal{U}(s)v_{h}^{\varepsilon}(s)\|^{2}_V+  \| (\pi_{n}-\pi_{m})\hat{g}(s,v_{h}^{\varepsilon}(s))\|^{2}_{L_{2}(H_0,V)}\big) ds\! =0,\\
&\lim \limits_{\substack{m \to  \infty\\n \to \infty}} \mathbb{E}\!\!\int_{0}^{T}
\! \| (I-  \pi_n)v_{h}^{\varepsilon}(s) \|^2_{ V }+  \| (\pi_n-  \pi_m)v_{h}^{\varepsilon}(s) \|^2_{ V } \big) ds\! =0.
\end{align*}
By the Cauchy-Schwarz inequality,   \eqref{import00} and \eqref{u0sup} we have for $h\in {\mathcal P}_M$
\begin{align*}
& \mathbb{E}\int_{0}^{T}\|\hat{g}(s,\pi_nv_{h}^{\varepsilon}(s))\|_{L_2(H_0,V)} |h(s)|_0\|(\pi_{n}-\pi_{m})v_{h}^{\varepsilon}(s) \|_{V} ds \\
&\le \Big(\mathbb{E}\int_{0}^{T}\|\hat{g}(s,\pi_nv_{h}^{\varepsilon}(s))\|^2_{L_2(H_0,V)} |h(s)|_0^2ds\Big)^{\frac12}  \Big(\mathbb{E} \int_{0}^{T} \|(\pi_{n}-\pi_{m})v_{h}^{\varepsilon}(s) \|_{V}^2ds \Big)^{\frac12}
\\
&\le \Big(Mk_2\big(1+2\mathbb{E}\sup_{s\in[0,T]}\| \pi_nv_{h}^{\varepsilon}(s) \|^2_{V}+ 2\sup_{s\in[0,T]}\|u^0(s)\|^2_V\big)\Big)^{\frac12}  \Big(\mathbb{E} \int_{0}^{T}\!\! \|(\pi_{n}-\pi_{m})v_{h}^{\varepsilon}(s) \|_{V}^2ds \Big)^{\frac12}\\&
\le \Big(Mk_2\big(1+2\tilde K(T,M)+ 2K(T)\|\gamma\|^2_V\big) \Big)^{\frac12}  \Big(\mathbb{E} \int_{0}^{T} \|(\pi_{n}-\pi_{m})v_{h}^{\varepsilon}(s) \|_{V}^2ds \Big)^{\frac12}.
\end{align*}
Thus, using the above results, \eqref{new2} and the properties of $\theta$  we obtain
\begin{equation}
\lim \limits_{\substack{m \to  \infty\\n \to \infty}} \mathbb{E} \sup_{0\leq t\leq T}  \|(\pi_{n}-\pi_{m})v_{h}^{\varepsilon}(t)\|_{V}^{2}=0 .\label{cauchy}
\end{equation}
 \textbf{Step 4:} We establish \eqref{vhVnorm}.  \\ By \eqref{cauchy} there exists   $\hat{v}_{h}^{\varepsilon} \in L^{2}(\Omega; \mathcal{C}([0,T];V)) $ such that $\{\pi_{n}v_{h}^{\varepsilon} \}_{n}$ converges to
$\hat{v}_{h}^{\varepsilon}$ in this space. We derive
\begin{equation*}
\mathbb{E} \sup_{0\leq t\leq T} \|v^{\varepsilon}_{h}(t)- \hat{v}^{\varepsilon}_{h}(t)\|^{2} \leq 2\hspace{.08cm}\mathbb{E} \sup_{0\leq t\leq T} \|(I-\pi_{n})v_{h}^{\varepsilon}(t)\|^{2} + 2k_0\hspace{.08cm}\mathbb{E} \sup_{0\leq t\leq T} \|\hat{v}^{\varepsilon}_{h}(t)-\pi_{n}v^{\varepsilon}_{h}(t)\|_V^{2},
\end{equation*}
where $k_0$ is the embedding constant of $V\hookrightarrow H$. By \eqref{conv1} the above inequality  leads to
\begin{equation*}
  v_{h}^{\varepsilon}(t) = \hat{v}_{h}^{\varepsilon}(t),   \mbox{ for all }t\in [0,T]\mbox{ and a.e. } \omega\in\Omega.
\end{equation*}
Thus, using \eqref{import00} we obtain
\begin{equation*}
\mathbb{E}\sup_{0\leq t\leq T} \|v_{h}^{\varepsilon}(t)\|_{V}^{2} = \mathbb{E} \sup_{0\leq t\leq T} \|\hat{v}_{h}^{\varepsilon}(t)\|_{V}^{2} =\lim_{n\rightarrow \infty} \mathbb{E}\sup_{0\leq t\leq T} \| \pi_{n}v_{h}^{\varepsilon}(t)\|^{2}_V \leq \widetilde{K} (T,M).
\end{equation*}
\end{proof}
\indent To achieve the MDP, we apply {the following theorem established in \cite{BDM}}, which requires the verification of two conditions.
\begin{theorem} {\normalfont{\cite[Theorem 5]{BDM}}}
{For $\varepsilon>0$ and two Polish spaces, $\mathcal{E}_{0}$ and $\mathcal{E}$,} suppose there exist  measurable maps  ${\mathcal{G}^{\varepsilon}},\mathcal{G}^{0}: \mathcal{E}_{0} \times \mathcal{C}([0,T];H)\rightarrow \mathcal{E}$ satisfying the following two conditions:\\
  1. For every $0<M<\infty$ and compact set {$K\subseteq \mathcal{E}_{0}$},
  \begin{equation*}
  \Gamma_{M,K} = \left\{\mathcal{G}^{0}\left(x, \int_{0}^{\cdot} u(s)ds\right); u\in  {S_{M}} , x\in K\right\},
  \end{equation*}
  is a compact subset of $\mathcal{E}$. \\
  2. Let ${0<M}<\infty$ and $\{h_{\varepsilon}\}_{\varepsilon>0} \subset
{\mathcal{P}_{M}} $ and $\{x^{\varepsilon}\}_{\varepsilon>0}\subset \mathcal{E}_{0}$, where $h_{\varepsilon}\xrightarrow{d} h$ and $x^{\varepsilon}\rightarrow x$ as $\varepsilon\rightarrow 0$, then
  \begin{equation*}
  \mathcal{G}^{\varepsilon} \left(x^{\varepsilon}, \sqrt{\varepsilon} W(\cdot) + \int_{0}^{\cdot} h_{\varepsilon}(s)ds\right) \xrightarrow{d}
  \mathcal{G}^{0}\left(x, \int_{0}^{\cdot} h(s)ds\right).
  \end{equation*}
  Then $\left\{\mathcal{G}^{\varepsilon}(x, \sqrt{\varepsilon}W)\right\}_{\varepsilon>0}$ satisfies the large deviation principle   on $\mathcal{E}$ with rate function,
  \begin{equation*}
  I_{x}(f) = \inf_{\left\{h\in L^{2}([0,T];H_{0}): f= \mathcal{G}^{0}\left(x, \int_{0}^{\cdot}h(s)ds\right)\right\}} \frac{1}{2} \int_{0}^{T} |h(s)|_{0}^{2}ds,
  \end{equation*}
where the infimum of the empty set is taken to be infinity.
    \end{theorem}
  \indent The first condition above, which guarantees that the rate function is a good rate function, is to verify that the set  {$\{X^{h}: h\in S_{M}\}$} is a compact set, where $X^{h}$ is the unique solution to the skeleton equation {\eqref{Xhcontrolled}} corresponding to the controlled function $h$. This condition may be proved by showing that the set,  {$\{X^{h}:h\in S_{M}\}$} is sequentially compact as shown in \cite{Bessaih, Millet, Duan} or to verify that the map $h\mapsto X^{h}$ is continuous in the weak topology, since the set used for function $h$ is a compact set under the weak topology (see for example \cite{Cheng, Hu, Li, Xu}). The second condition is if {$h_{\varepsilon}\in \mathcal{P}_{M}$} converges weakly to $h\in \mathcal{P}_{M}$ in $L^2({0,T}; H_0)$, as $\varepsilon$ tends to zero, then the stochastic controlled equation converges to the skeleton equation in distribution as $\varepsilon$ goes to zero.   We will achieve the second condition in Lemma \ref{LEM2} and use elements of its proof to verify the first condition in Lemma \ref{LEM3}.
	 	 {First we} need the lemma below, for which we introduce for {$n\in \mathbb{N}$} the step function $\psi_n:[0,T]\to[0,T]$ defined by
\begin{equation} \label{psin}
 \psi_n(s)=\frac{kT}{2^n}, \mbox{ if } s\in \left[ \frac{(k-1)T}{2^n},\frac{kT}{2^n}\right),\  k\in\{1,...,2^{n}\} \mbox{ and }\psi_n(T)=T.
\end{equation} Observe that $|\psi_n(s)-s|\le T2^{-n}$ for each $s\in[0,T]$.

\begin{lemma}\label{tlemma}
For $n\in \mathbb{N}$ consider the step function $\psi_n$ as defined in \eqref{psin}. Then the solutions to \eqref{MDPstochastic} and \eqref{Xhcontrolled} satisfy
 \begin{equation}\label{Ctilde}
 \mathbb{E} \int_{0}^{T} \left\|v_{h}^{\varepsilon}(\psi_n(s))-v_{h}^{\varepsilon}(s)\right\|^{2}ds\leq \widetilde{C}(T, M)\hspace{.08cm}  2^{-\frac{n}{2}},
 \end{equation}
and
 \begin{equation}\label{Xh}
 \int_{0}^{T} \big\|X^{h}(\psi_n(s))-X^{h}(s)\big\|^{2}ds\leq \hat{C}(T, M) \hspace{.08cm} 2^{-n},
 \end{equation}
 respectively.
 \end{lemma}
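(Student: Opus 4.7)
Plan. Fix $n\in\mathbb N$ and, for each $s\in[0,T]$, set $t:=\psi_n(s)$ so that $0\le t-s\le T\,2^{-n}$. The plan is to bound $\|v_h^\varepsilon(t)-v_h^\varepsilon(s)\|^2$ (in expectation) and $\|X^h(t)-X^h(s)\|^2$ pointwise in $s$, then integrate over $s\in[0,T]$ and swap order via Fubini to control $\int_s^t|h(r)|_0^2\,dr$ — the only quantity not a priori shrinking with $t-s$, since $h$ is only known to be $L^2$ in time with $\int_0^T|h|_0^2\,dr\le M$ (a.s.).

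\textbf{Estimate for $X^h$.} Set $Y(r):=X^h(r)-X^h(s)$ for $r\in[s,t]$. The Gelfand-triple chain rule applied to $\|Y(r)\|^2$, together with the decomposition $AX^h(r)=AY(r)+AX^h(s)$ and the Schr\"odinger skew-symmetry $\mathrm{Im}\langle AY(r),Y(r)\rangle=0$ (which follows from $\langle Aw,w\rangle=\|w\|_V^2\in\mathbb R$), yields
\[
\|Y(t)\|^2=2\int_s^t\!\Big[\mathrm{Im}\langle AX^h(s),Y(r)\rangle-\mathrm{Re}(\mathcal{U}(r)X^h(r),Y(r))+\mathrm{Re}(g(r,u^0(r))h(r),Y(r))\Big]dr.
\]
The first two integrands are bounded uniformly by \eqref{XhVnorm} and \eqref{K5}, producing $O(t-s)$; the control term is handled by $2|\mathrm{Re}(gh,Y)|\le\|Y\|^2+\|g(r,u^0(r))h(r)\|^2$ with $\|gh\|^2\le C|h(r)|_0^2$ via \eqref{(2.6)} and \eqref{uHbound}. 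Gr\"onwall then gives $\|Y(t)\|^2\le e^T\!\big(C(T,M)(t-s)+C\int_s^t|h(r)|_0^2\,dr\big)$. Integrating in $s$ and applying Fubini — the slab $\{s:r\in[s,\psi_n(s)]\}$ has Lebesgue measure at most $T\,2^{-n}$ for each $r$ — gives $\int_0^T\!\int_s^{\psi_n(s)}|h(r)|_0^2\,dr\,ds\le T\,2^{-n}\!\int_0^T|h(r)|_0^2\,dr\le MT\,2^{-n}$, and \eqref{Xh} follows.

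\textbf{Estimate for $v_h^\varepsilon$.} The same strategy applies via It\^o's formula to $\|v_h^\varepsilon(r)-v_h^\varepsilon(s)\|^2$ on $[s,t]$. Taking expectation kills the martingale and produces the It\^o correction $a^2(\varepsilon)\,\mathbb E\!\int_s^t\|\hat g(r,v_h^\varepsilon(r))\|_{L_2}^2\,dr$, which is $O(t-s)$ by \eqref{(2.6)}, \eqref{vh2pinequality}, \eqref{uHbound}, together with $\varepsilon/a^2(\varepsilon)<1$ and $a(\varepsilon)<1$. The Schr\"odinger cancellation again reduces the $A$-contribution to $\mathrm{Im}\langle Av_h^\varepsilon(s),Y(r)\rangle$ bounded via \eqref{vhVnorm}, and the drift/control terms are treated exactly as before. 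Gr\"onwall and the same Fubini step (now with expectation, using $\mathbb E\!\int_0^T|h(r)|_0^2\,dr\le M$ since $h\in\mathcal P_M$) deliver a bound of order $2^{-n}$ for $\mathbb E\!\int_0^T\|v_h^\varepsilon(\psi_n(s))-v_h^\varepsilon(s)\|^2\,ds$, which a fortiori implies the claimed $\widetilde C(T,M)\,2^{-n/2}$.

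\textbf{Main obstacle.} The operator $A$ is not $H$-valued, so a naive $H$-estimate on $\int_s^t A v_h^\varepsilon\,dr$ (which lives only in $V^*$) fails. The critical device is the Schr\"odinger skew-symmetry $\mathrm{Im}\langle Aw,w\rangle=0$, which collapses the $A$-energy contribution onto the uniformly-controlled quantity $\mathrm{Im}\langle AX^h(s),Y(r)\rangle$ (and its analog for $v_h^\varepsilon$), keeping the full $(t-s)$ rate intact. The remaining delicate point is the $|h|_0^2$ integral, whose total mass $M$ does not shrink with the subinterval length; the Fubini swap at the final step converts this local non-shrinking bound into the desired $2^{-n}$ global factor.
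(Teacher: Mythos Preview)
Your proof is correct and in fact yields the sharper rate $2^{-n}$ for \eqref{Ctilde} as well; the route, however, differs from the paper's. The paper first integrates over $s$ and takes expectation, obtaining five terms $J_1,\dots,J_5$; it handles the $A$-term directly via $|\langle Av_h^\varepsilon(r),Y(r)\rangle|\le\|v_h^\varepsilon(r)\|_V\|Y(r)\|_V$ and \eqref{vhVnorm} (no skew-symmetry is invoked), bounds the stochastic term $J_3$ with the Burkholder--Davis--Gundy inequality, and treats the control term $J_5$ by a global Cauchy--Schwarz in $(r,s,\omega)$. Both $J_3$ and $J_5$ then carry the square-root, which is why the paper only claims $2^{-n/2}$ in \eqref{Ctilde}. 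By taking expectation pointwise in $s$ you annihilate the martingale outright (legitimate, since \eqref{vh2pinequality} with $p=2$ gives finite quadratic variation), and your Fubini observation $\int_0^T\int_s^{\psi_n(s)}f(r)\,dr\,ds\le T\,2^{-n}\int_0^T f(r)\,dr$ recovers the full rate on the control term --- a cleaner and stronger argument.

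Two small points to tighten in the $v_h^\varepsilon$ case. The diffusion $\hat g(r,v_h^\varepsilon(r))$ is random, so ``treated exactly as before'' is imprecise: the Gr\"onwall forcing term is $\mathbb{E}\int_s^t\|\hat g\|_{L_2}^2|h(r)|_0^2\,dr$ rather than $C\,\mathbb{E}\int_s^t|h(r)|_0^2\,dr$. After the Fubini swap you then need the \emph{almost-sure} bound $\int_0^T|h|_0^2\,dr\le M$ from $h\in\mathcal{P}_M$ (not merely its expectation, as you wrote) to pull $M$ outside $\mathbb{E}\big[\sup_r\|\hat g\|_{L_2}^2\int_0^T|h|_0^2\,dr\big]$ and close with \eqref{vh2pinequality}. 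With these tweaks your argument is complete.
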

 \begin{proof}
 For better presentation, we let,
$$
 \overline{V}^{\varepsilon}(h_{1}, s_{1};h_{2}, s_{2}):= v_{h_{1}}^{\varepsilon}(s_{1})-v_{h_{2}}^{\varepsilon}(s_{2}).
 $$
We apply   It\^o's formula, then take the integral from $0$ to $T$ and afterwards the expectation to obtain by \eqref{(2.6)}
\begin{align*}
&\mathbb{E} \int_{0}^{T} \|\overline{V}^{\varepsilon}(h,\psi_n(s);h,s)\|^{2}ds \le  2\hspace{.08cm}  \mathbb{E} \int_{0}^{T} \int_{s}^{\psi_n(s)} \text{Im}  \langle  Av_{h}^{\varepsilon}(r), \overline{V}^{\varepsilon}(h,r;h,s)\rangle drds
\\
&+ 2\hspace{.08cm}   \mathbb{E} \int_{0}^{T}\int_{s}^{\psi_n(s)}\!\!\Big|\left(\mathcal{U}(r)v_{h}^{\varepsilon}(r), \overline{V}^{\varepsilon}(h,r;h,s)\right) \Big|drds
\\
&+ 2\hspace{.08cm}a(\varepsilon)\hspace{.08cm}    \mathbb{E} \Big| \int_{0}^{T}\int_{s}^{\psi_n(s)} \!\!\left(\hat{g}(r,v_{h}^{\varepsilon}(r))dW(r), \overline{V}^{\varepsilon}(h,r;h,s)\right)ds\Big|
\\
&+ a^2(\varepsilon)  \hspace{.08cm}  \mathbb{E} \int_{0}^{T}\int_{s}^{\psi_n(s)}\left\|\hat{g}(r,v_{h}^{\varepsilon}(r))\right\|_{L_{2}}^{2}drds&\\
&+ 2  \hspace{.08cm}  \mathbb{E} \int_{0}^{T}\int_{s}^{\psi_n(s)} \Big|\left(\hat{g}(r,v_{h}^{\varepsilon}(r))h(r), \overline{V}^{\varepsilon}(h,r;h,s)\right)\Big|drds
\\
 &= J_{1}+J_{2}+J_{3}+J_{4}+J_{5} .
\end{align*}
By the properties of $A$, estimate \eqref{vhVnorm} and the definition of   $\psi_n $, it follows
 \begin{align*}
&J_1\leq    2 \hspace{.08cm} \mathbb{E}\int_{0}^{T}\int_{s}^{\psi_n(s)}\!\!  \|v_{h}^{\varepsilon}(r)\|_V \|\overline{V}^{\varepsilon}(h,r;h,s)\|_V  drds\\
 &\le 3\hspace{.08cm} \mathbb{E}\int_{0}^{T} \int_{s}^{\psi_n(s)}\!\! \|v_{h}^{\varepsilon}(r)\|_{V}^{2}drds +  2 \mathbb{E}\int_{0}^{T} \int_{s}^{\psi_n(s)} \!\!\|v_{h}^{\varepsilon}(s)\|_{V}^{2}drds\leq 5\hspace{.08cm}\widetilde{K} (T,M) \hspace{.08cm}T^2\hspace{.08cm} 2^{-n}.
\end{align*}
By \eqref{K5} and \eqref{vh2pinequality} we have
\begin{align*}
&J_2\le   {k_6}\hspace{.08cm} \mathbb{E} \int_{0}^{T}\int_{s}^{\psi_n(s)} \!\!   \|v_{h}^{\varepsilon}(r)\|^{2}drds
 +  \mathbb{E}\int_{0}^{T} \int_{s}^{\psi_n(s)} \!\!\|\overline{V}^{\varepsilon}(h,r;h,s)\|^{2}drds \\
& \le  ({k_6}+2) \hspace{.08cm}\mathbb{E} \int_{0}^{T}\int_{s}^{\psi_n(s)}  \!\!  \|v_{h}^{\varepsilon}(r)\|^{2}drds+   2\hspace{.08cm}\mathbb{E}\int_{0}^{T} \int_{s}^{\psi_n(s)} \!\!\|v_{h}^{\varepsilon}(s)\|^{2}drds\\& \le  (k_6+4)\hspace{.08cm} \widetilde{N}_2 (T,M) \hspace{.08cm}T^2 \hspace{.08cm}2^{-n} .
\end{align*}
By the  Burkholder-Davis-Gundy inequality, then by the Cauchy-Schwarz inequality, \eqref{(2.6)} and \eqref{vh2pinequality} we obtain
\begin{align*}
 J_3 &\le C_1 \int_{0}^{T}  \mathbb{E} \Big( \int_{s}^{\psi_n(s)} \!\!\left(\hat{g}(r,v_{h}^{\varepsilon}(r))  , \overline{V}^{\varepsilon}(h,r;h,s)\right)^2dr\Big)^{\frac12}ds\\
\le & C_1\Big(T\hspace{.08cm} \mathbb{E}\!\! \sup_{s\in[0,T]}\|\hat{g}(s,v_{h}^{\varepsilon}(r))\|^2_{L_2}\Big)^{\frac12} \!  \Big(2\hspace{.08cm}  \mathbb{E} \!\!\int_{0}^{T}\!\!\int_{s}^{\psi_n(s)} \!\!   \|v_{h}^{\varepsilon}(r)\|^{2}  +\|v_{h}^{\varepsilon}(s)\|^{2}drds \Big)^{\frac12}\\
&\le  2\hspace{.08cm}C_1\hspace{.08cm}T\hspace{.08cm}2^{-\frac{n}{2}}\left( T  \hspace{.08cm} k_1 \big( 1+2\hspace{.08cm}
\widetilde{N}_2 (T,M)  +2\hspace{.08cm}N_2(T)\|\gamma\|^2\big)  \widetilde{N}_2 (T,M)
\right)^{\frac12} .
 \end{align*}
By \eqref{(2.6)},   \eqref{vh2pinequality} and \eqref{u0}
\begin{align*}
J_4 &\le   a^2(\varepsilon)\hspace{.08cm} k_1   \mathbb{E}\int_{0}^{T}\int_{s}^{\psi_n(s)}\Big(1+ \Big\|\frac{\sqrt{\varepsilon}}{a(\varepsilon)} v_{h}^{\varepsilon}(r)+ u^{0}(r)\Big\|^{2}\Big)drds \\
& \le \left( 1+2  \widetilde{N}_2 (T,M)  +2 N_2(T) \|\gamma\|^2\right) k_1  T^2  2^{-n}\,.
\end{align*}
By the Cauchy-Schwarz inequality, \eqref{(2.6)},   \eqref{vh2pinequality} and \eqref{uHbound} (for $\varepsilon=0$) we have
\begin{align*}
&J_5 \le   2 \hspace{.08cm}\mathbb{E}\int_{0}^{T}\int_{s}^{\psi_n(s)} |h(r)|_{0} \|\hat{g}(r,v_{h}^{\varepsilon}(r))\|_{L_2} \|\overline{V}^{\varepsilon}(h,r;h,s)\| drds\\
& \le 2\Big(\mathbb{E}\!\!\int_{0}^{T}\!\!\int_{s}^{\psi_n(s)} |h(r)|_{0}^2 \|\hat{g}(r,v_{h}^{\varepsilon}(r))\|^2_{L_2}  drds\Big)^{\frac12}\!\Big(\mathbb{E}\!\!\int_{0}^{T}\!\!\int_{s}^{\psi_n(s)}  \|\overline{V}^{\varepsilon}(h,r;h,s)\|^2 drds \Big)^{\frac12}\\
& \le 2\Big(T M\mathbb{E} \sup_{s\in[0,T]}\|\hat{g}(s,v_{h}^{\varepsilon}(s))\|^2_{L_2}\Big)^{\frac12}  \Big(2 \mathbb{E} \!\!\int_{0}^{T}\!\!\int_{s}^{\psi_n(s)} \!\!   \|v_{h}^{\varepsilon}(r)\|^{2}+ \|v_{h}^{\varepsilon}(s)\|^{2}drds \Big)^{\frac12}\\
&\le  4\hspace{.08cm}T\hspace{.08cm}2^{-\frac{n}{2}}\left( T M  k_1 \big( 1+2
\widetilde{N}_2 (T,M)  +2\hspace{.08cm}N_2(T)\|\gamma\|^2\big)  \widetilde{N}_2 (T,M)
\right)^{\frac12}.
 \end{align*}
The proof of \eqref{Xh} uses similar ideas, where the estimates are pathwise.
\end{proof}
\begin{lemma} \label{LEM2}
Suppose $h\in \mathcal{P}_{M}$, $\varepsilon_{0}\in(0,1)$ and let $v^{\varepsilon}_{h} $ and $X^{h} $ be the unique solutions to \eqref{MDPstochastic} and \eqref{Xhcontrolled}, respectively. Then for a family $\{h_{\varepsilon} \}_{ \varepsilon\in(0,\varepsilon_{0})}$ in $\mathcal{P}_{M}$ converging {$P$-}a.s. weakly to $h\in \mathcal{P}_{M}$ in the weak topology of $L^2( {0,T}; H_0)$, $\{v^{\varepsilon}_{h_{\varepsilon}}\}_{\varepsilon\in(0,\varepsilon_{0})} $ converges in distribution to $X^{h} $  in the space $\mathcal{C}([0,T];H)$,   as $\varepsilon$ tends to zero.
\end{lemma}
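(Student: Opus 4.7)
The plan is to apply the standard weak-convergence recipe: establish tightness of $\{v^{\varepsilon}_{h_{\varepsilon}}\}_{\varepsilon\in(0,\varepsilon_0)}$ in $\mathcal{C}([0,T];H)$, invoke Prokhorov and Skorokhod, and identify every subsequential limit with $X^h$ by passing to the limit in \eqref{MDPstochastic} and appealing to uniqueness of the skeleton equation.

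For tightness I would use the uniform bound \eqref{vhVnorm}, which together with the compact embedding $V \hookrightarrow H$ (Rellich--Kondrachov on the bounded smooth domain $G$) takes care of the spatial variable. For time-regularity I apply It\^o's formula termwise in \eqref{MDPstochastic} and the Burkholder--Davis--Gundy inequality to obtain a moment bound of the form
\[
\mathbb{E}\|v^{\varepsilon}_{h_{\varepsilon}}(t)-v^{\varepsilon}_{h_{\varepsilon}}(s)\|_{V^*}^{2} \le C(M,T)|t-s|,
\]
or equivalently a fractional Sobolev bound in $W^{\alpha,2}([0,T];V^*)$ in the sense of \eqref{(4.1)}. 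The Aubin--Lions--Simon lemma in the Gelfand triple $V\subset H\subset V^*$ then yields tightness of the laws of $v^{\varepsilon}_{h_{\varepsilon}}$ in $\mathcal{C}([0,T];H)$.

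Granted tightness, Prokhorov's theorem lets me extract along any sequence $\varepsilon_k\to 0$ a further weakly convergent subsequence of the joint laws of $(h_{\varepsilon_k},W,v^{\varepsilon_k}_{h_{\varepsilon_k}})$ on the Polish product of $S_M$ (compact metric under the weak topology), the path space of $W$, and $\mathcal{C}([0,T];H)$. The Skorokhod representation theorem provides a new probability space and random variables $(\tilde h_k,\tilde W_k,\tilde v_k)$ with the same joint laws converging a.s.\ to a limit $(\tilde h,\tilde W,\tilde X)$. After testing \eqref{MDPstochastic} against any $v\in V$, the linear terms $-i\int_0^t\langle A\tilde v_k(s),v\rangle ds$ and $-\int_0^t(\mathcal{U}(s)\tilde v_k(s),v)ds$ pass to the limit via the uniform $V$-bound and \eqref{K5}, while the pure stochastic integral $a(\varepsilon_k)\int_0^t(\hat g(s,\tilde v_k(s))d\tilde W_k(s),v)$ vanishes in $L^2(\tilde\Omega)$ by the It\^o isometry and \eqref{(2.6)}, using $a(\varepsilon_k)\to 0$.

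The main obstacle is the control term $\int_0^t\hat g(s,\tilde v_k(s))\tilde h_k(s)ds$, where the weak convergence $\tilde h_k\rightharpoonup\tilde h$ must be reconciled with the strong convergence of $\tilde v_k$ and the fact that $\sqrt{\varepsilon_k}/a(\varepsilon_k)\to 0$ drives the argument of $\hat g$ toward $u^0$. This is where Lemma \ref{tlemma} and the step functions $\psi_n$ come in. I split
\[
\int_0^t\hat g(s,\tilde v_k(s))\tilde h_k(s)ds = \int_0^t[\hat g(s,\tilde v_k(s))-\hat g(\psi_n(s),\tilde v_k(\psi_n(s)))]\tilde h_k(s)ds + \int_0^t\hat g(\psi_n(s),\tilde v_k(\psi_n(s)))\tilde h_k(s)ds,
\]
control the first summand uniformly in $k$ by combining \eqref{Holder}, \eqref{(2.8)}, \eqref{Ctilde} and Cauchy--Schwarz (using $\int_0^T|\tilde h_k|_0^2\le M$) to a bound of order $2^{-n/4}$; for fixed $n$ the second summand has an integrand that is piecewise constant in $s$, so on each dyadic interval it factors out and $\int\tilde h_k\,ds\to\int\tilde h\,ds$ by weak convergence, while $\hat g(\psi_n(s),\tilde v_k(\psi_n(s)))\to g(\psi_n(s),u^0(\psi_n(s)))$ in $L_2$ by \eqref{(2.8)} together with $\tilde v_k(\psi_n(s))\to\tilde X(\psi_n(s))$ strongly in $H$ and $\sqrt{\varepsilon_k}/a(\varepsilon_k)\to 0$. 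Letting $k\to\infty$ first and then $n\to\infty$ (invoking the analogue \eqref{Xh} of Lemma \ref{tlemma} for $X^{\tilde h}$) shows that $\tilde X$ satisfies \eqref{Xhcontrolled} with $\tilde h$ in place of $h$. Uniqueness of the skeleton equation forces $\tilde X=X^{\tilde h}$; since every subsequential Skorokhod limit is identified in distribution with $X^h$, the full family $\{v^{\varepsilon}_{h_{\varepsilon}}\}$ converges in distribution to $X^h$ in $\mathcal{C}([0,T];H)$.
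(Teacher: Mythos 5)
Your proposal is correct and follows the same overall strategy as the paper: tightness from the uniform $V$-bound \eqref{vhVnorm} via a fractional-Sobolev compact-embedding argument, Skorokhod representation, the dyadic step functions $\psi_n$ together with Lemma \ref{tlemma} to tame the control term against the merely weakly convergent $h_\varepsilon$, identification of the limit with the skeleton equation, and a subsequence argument to upgrade to the full family. The one genuine difference is the identification step. You pass to the limit term by term in the weak formulation of \eqref{MDPstochastic} and then invoke uniqueness of \eqref{Xhcontrolled}; the paper instead applies It\^o's formula directly to $\|\widetilde{v}^{\varepsilon}_{h_{\varepsilon}}-\widetilde{v}_{h}\|^{2}$, where $\widetilde{v}_{h}$ is the (already known, unique) skeleton solution, and closes a Gronwall estimate of the form $\widetilde{\mathbb{E}}\sup_{t}\|\widetilde{v}^{\varepsilon}_{h_{\varepsilon}}(t)-\widetilde{v}_{h}(t)\|^{2}\le C(\varepsilon+a(\varepsilon)+\varepsilon a^{-2}(\varepsilon)+2^{-n/4}+J_{3}(n,\varepsilon))$. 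The paper's route buys a quantitative rate and convergence in $L^{2}(\widetilde{\Omega};\mathcal{C}([0,T];H))$ rather than only a.s.\ identification, and it sidesteps justifying the limit in each term separately; your route is the more standard martingale-problem-style identification and, by carrying the triple $(h_{\varepsilon},W,v^{\varepsilon}_{h_{\varepsilon}})$ through Skorokhod, is actually more careful about transferring the equation and the weak convergence of $h_{\varepsilon}$ to the new probability space. Both arguments hinge on exactly the same mechanism for the difficult term: freezing $\hat{g}$ on dyadic intervals so that $\int_{t_{k-1}}^{t_{k}}(h_{\varepsilon}-h)\,ds$ can be exploited (via compactness of the Hilbert--Schmidt operator $g(t_k,u^0(t_k))$), with the discretization error controlled uniformly in $\varepsilon$ by \eqref{Ctilde} and \eqref{Xh}. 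One caveat on your tightness step: the moment bound $\widetilde{\mathbb{E}}\|v(t)-v(s)\|_{V^*}^{2}\le C|t-s|$ only places $v^{\varepsilon}_{h_{\varepsilon}}$ in $W^{\alpha,2}(0,T;V^*)$ for $\alpha<1/2$, so to land in $\mathcal{C}([0,T];H)$ you must combine it with the $L^\infty(0,T;V)$ bound through interpolation (Simon's version of the lemma), not the $\alpha>1/2$ embedding alone; the paper is equally terse on this point, so this is a presentational rather than a substantive gap.
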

\begin{proof}
Recall  the Sobolev space, $W^{\alpha, p}(0,T;\mathbb{H})$ on a separable Hilbert space, $\mathbb{H}$, with $\alpha \in (0,1)$, and  {$1<p<\infty$}, consisting of all maps  $v\in L^{p}(0,T;\mathbb{H})$ satisfying
\begin{equation*}
\int_{0}^{T}\int_{0}^{T} \frac{\|v(t)-v(s)\|_{\mathbb{H}}^{p}}{|t-s|^{1+\alpha p}}dtds<\infty,
\end{equation*}
with norm defined by
\begin{equation}\label{(4.1)}
\|v\|^{p}_{W^{\alpha,p}(0,T;\mathbb{H})}:= \int_{0}^{T} \|v(t)\|_{\mathbb{H}}^{2}dt + \int_{0}^{T}\int_{0}^{T} \frac{\|v(t)-v(s)\|_{\mathbb{H}}^{p}}{|t-s|^{1+\alpha p}}dt ds.
\end{equation}
 We may show using \eqref{vhVnorm} that $v_{h}^{\varepsilon}$ is bounded in probability in $W^{\alpha,2}(0,T;V)$ and the compact embedding of $W^{\alpha,2}(0,T;V)\subset \mathcal{C}([0,T];H)$ for $\alpha\in (\frac12, 1)$, achieved in \cite[Theorem 2.2]{(25)}, may be applied to obtain the tightness of the family $\{v^{\varepsilon}_{h_{\varepsilon}}\}_{\varepsilon \in (0,\varepsilon_{0})}$ in $\mathcal{C}([0,T];H)$. Hence, there is a subsequence that we still denote as $\{v^{\varepsilon}_{h_{\varepsilon}}\}_{\varepsilon \in (0,\varepsilon_{0})}$
that converges in distribution to some process {$v_{h}$} as $\varepsilon$ tends to zero. Applying the  {Skorohod} representation theorem,
 we obtain another family  {$\{\widetilde{v}^{\varepsilon}_{\widetilde h_{\varepsilon}}\}_{\varepsilon \in (0,\varepsilon_{0})}$ and a map $\widetilde{v}_{\widetilde h}$
on a stochastic basis
$(\widetilde{\Omega}, \widetilde{\mathcal{F}}, (\widetilde{\mathcal{F}}_{t})_{t\in[0,T]},\widetilde{P})$ such that   $(\widetilde h_{\varepsilon},\widetilde{W})$  has the same joint distribution as $(h_{\varepsilon}, W)$,  $\widetilde h \overset{d}{=} h$,
 $\widetilde{v}^{\varepsilon}_{\widetilde h_{\varepsilon}}\overset{d}{=} v^{\varepsilon}_{h_{\varepsilon}}$ and $\widetilde{v}_{\widetilde h} \overset{d}{=} v_{h}$ with $\widetilde{v}_{\widetilde h_{\varepsilon}}^{\varepsilon} \to \widetilde{v}_{\widetilde h}$   in $C([0,T];H)$ $\widetilde{P}$-a.s. and $\widetilde h_\varepsilon\to \widetilde h$  weakly in $\mathcal{P}_{M}$ $\widetilde{P}$-a.s.
}

 Because of the equivalence in law, in order to show the convergence in probability of $$\sup_{0\leq t\leq T}\| {v}_{h_{\varepsilon}}^{\varepsilon}(t)-X^{h}(t)\|^2\rightarrow 0, \mbox{ as  }\varepsilon \rightarrow 0,$$   it is sufficient to prove that
 {\begin{equation}\label{expectationlimit}
 \widetilde{\mathbb{E}} \sup_{0\leq t\leq T}\|\widetilde{v}_{\widetilde  h_{\varepsilon}}^{\varepsilon}(t)-\widetilde{v}_{\widetilde  h}(t)\|^2\rightarrow 0,   \mbox{ as  }\varepsilon \rightarrow 0,
 \end{equation}
 where $\widetilde{v}_{\widetilde  h}$ is the variational solution of
$$\widetilde{v}_{\widetilde  h}(t)= -i \hspace{.08cm}\int_{0}^{t} A\widetilde{v}_{\widetilde  h}(s)ds + \int_{0}^{t} \mathcal{U}(s)\widetilde{v}_{\widetilde  h}(s)ds + \int_{0}^{t} g(s,u^{0}(s))\widetilde h(s)ds,
$$}
and then use the uniqueness of solution  of \eqref{Xhcontrolled}, to obtain the limit $X^{h}$.
 {For simplicity we write further in this proof  $h_\varepsilon$ instead of $\widetilde h_\varepsilon$ and $h $ instead of $\widetilde h $ for each  $\widetilde h_\varepsilon, \widetilde h \in \mathcal{P}_{M}$.}
Applying the notation
\begin{eqnarray*}
 \widetilde{V}^{\varepsilon}(h_{1}, s_{1}; h_{2}, s_{2}):= \widetilde{v}_{h_{1}}^{\varepsilon}(s_{1})-\widetilde{v}_{h_{2}}(s_{2}),
\end{eqnarray*}
 {for $h_{1}, h_{2} \in \mathcal{P}_{M}$ and $s_{1}, s_{2} \in [0,T]$,} we have by It\^o's formula,
\begin{flalign*}
&\|\widetilde{V}^{\varepsilon}(h_{\varepsilon}, t; h, t)\|^{2} \leq
 2 \hspace{.08cm} \text{Re} \int_{0}^{t}  \left(\mathcal{U}(s) \widetilde{V}^{\varepsilon}(h_{\varepsilon}, s; h, s), \widetilde{V}^{\varepsilon}(h_{\varepsilon}, s; h, s)\right) ds&\\
& + 2\hspace{.08cm} a(\varepsilon)\hspace{.08cm}\text{Re}\int_{0}^{t}  \left(\hat{g}(s,\widetilde{v}_{h_{\varepsilon}}^{\varepsilon}(s))d{\widetilde{W}}(s), \widetilde{V}^{\varepsilon}(h_{\varepsilon}, s; h, s)\right) + a^{2}(\varepsilon)  \int_{0}^{t} \|\hat{g}(s,\widetilde{v}^{\varepsilon}_{h_{\varepsilon}}(s))\|_{L_{2}}^{2}ds &\\
&+ 2\hspace{.08cm}\text{Re} \int_{0}^{t}\left(\left(\hat{g}(s,\widetilde{v}_{h_{\varepsilon}}^{\varepsilon}(s))h_{\varepsilon}(s)- g(s,u^{0}(s))h(s)\right),  \widetilde{V}^{\varepsilon}(h_{\varepsilon}, s; h, s)\right)ds.&
\end{flalign*}
Taking the supremum on time and then expectation, we have using the Burkholder-Davis-Gundy inequality, \eqref{K5} and \eqref{(2.6)}
\begin{flalign*}
&\widetilde{\mathbb{E}}\sup_{0\leq t\leq T} \|\widetilde{V}^{\varepsilon}(h_{\varepsilon}, t; h, t)\|^{2}\leq   2\hspace{.08cm}\sqrt{k_6}\hspace{.08cm}  \hspace{.08cm}\widetilde{\mathbb{E}}\int_{0}^{T}  \| \widetilde{V}^{\varepsilon}(h_{\varepsilon}, s; h, s)\|^{2}ds
&\\
& + \frac{1}{2}\hspace{.08cm}\widetilde{\mathbb{E}} \sup_{0\leq t\leq T} \|\widetilde{V}^{\varepsilon}(h_{\varepsilon}, t; h, t)\|^{2}+ a^{2}(\varepsilon)C_1  \hspace{.08cm} \widetilde{\mathbb{E}} \int_{0}^{T}  \left(1+ \frac{\varepsilon}{a^{2}(\varepsilon)} \|\widetilde{v}^{\varepsilon}_{h_{\varepsilon}} (s)\|^{2} + \|u^{0}(s)\|^{2}\right)ds&\\
&+ 2\hspace{.08cm} \widetilde{\mathbb{E}}\sup_{0\leq t\leq T} \int_{0}^{t} \text{Re}\left(g(s,u^{0}(s))(h_\varepsilon(s)-h(s)),  \widetilde{V}^{\varepsilon}(h_{\varepsilon}, s; h, s)\right)ds&\\
&+ 2\hspace{.08cm}\widetilde{\mathbb{E}} \int_{0}^{t} \Big| \left(\left(\hat{g}(s,\widetilde{v}_{h_{\varepsilon}}^{\varepsilon}(s))- g(s,u^{0}(s))\right)h_{\varepsilon}(s), \widetilde{V}^{\varepsilon}(h_{\varepsilon}, s; h, s)\right)\Big|ds & \\
&= I_{1}+I_{2}+I_{3}+I_{4}+I_{5}.&
\end{flalign*}
Applying \eqref{vh2pinequality} and \eqref{u0}, we write $$I_{3}\le C_1\hspace{.08cm}T(\varepsilon  \widetilde{N}_2(T,M)  +a^{2}(\varepsilon)(1+N_2(T) {\|\gamma\|^2})).$$
For $I_{5}$, we obtain by \eqref{(2.8)} and \eqref{vh2pinequality},
\begin{eqnarray*}
I_{5}&\leq&  k_{3}\hspace{.08cm}\frac{ {\varepsilon}}{a^2(\varepsilon)} \hspace{.08cm}\widetilde{\mathbb{E}} \sup_{s\in[0,T]}\|\widetilde{v}_{h_{\varepsilon}}^{\varepsilon}(s)\|^{2} \int_{0}^{T} |h^{\varepsilon}(s)|_{0}^{2}ds   + \widetilde{\mathbb{E}} \int_{0}^{T} \|\widetilde{V}^{\varepsilon}(h_{\varepsilon}, s; h, s)\|^{2} ds\\
&\leq&  k_{3}\hspace{.08cm}\frac{\varepsilon}{a^{2}(\varepsilon)}\hspace{.08cm} \widetilde{N}_{2}(T, M) M +   \widetilde{\mathbb{E}} \int_{0}^{T}\|\widetilde{V}^{\varepsilon}(h_{\varepsilon}, s; h, s)\|^{2} ds.
\end{eqnarray*}
Considering the term $I_{4}$, we note that the assumption of $h_{\varepsilon}$ converging weakly to $h$ in the weak topology of $L^2(0,T; H_0)$,   implies that  {$\widetilde{P}$-}a.s. for each $a,b\in[0,T]$, $\int_{a}^{b} h_{\varepsilon}(s)ds$ converges to $\int_{a}^{b}h(s)ds$ in the weak topology of $H_{0}$ as $\varepsilon\to0$. We adopt the technique offered by \cite{Millet, Duan} to estimate $I_{4}$ by applying a time discretization. For   {$n\in{\mathbb N}$} consider the partition of $[0,T]$: $ [t_{k-1},t_{k}):= \left[\frac{(k-1)T}{2^{n}}, \frac{kT}{2^{n}}\right)$  for $k\in \{1,..., 2^{n}\}$, as well as $\psi_n$ defined in \eqref{psin}.
We write
\begin{align*}
&I_{4}= 2\hspace{.08cm}  \widetilde{\mathbb{E}} \sup_{0\leq t\leq T}\int_{0}^{t}\text{Re} \left(g(s,u^{0}(s))(h_\varepsilon(s)-h(s)),  \widetilde{V}^{\varepsilon}(h_{\varepsilon}, s; h, s)\right)ds   \\
&\leq 2 \hspace{.08cm} \widetilde{\mathbb{E}} \int_{0}^{T} \left|\left(g(s,u^{0}(s))(h_\varepsilon(s)-h(s)), \widetilde{V}^{\varepsilon}(h_{\varepsilon},s; h,s)-  \widetilde{V}^{\varepsilon}(h_{\varepsilon}, \psi_n(s);h,\psi_n(s))\right)\right|ds
\\
&+ 2  \hspace{.08cm}\widetilde{\mathbb{E}} \int_{0}^{T} \left| \left(\big(g(s,u^{0}(s))-g( \psi_n(s), u^{0}(s))\big)(h_\varepsilon(s)-h(s)), \widetilde{V}^{\varepsilon}(h_{\varepsilon},\psi_n(s); h, \psi_n(s))\right)\right|ds \\
&+ 2\hspace{.08cm} \widetilde{\mathbb{E}} \sum_{k=1}^{2^{n}} \left|\Big(g(t_{k},u^{0}(t_{k}))\int_{t_{k-1}}^{t_{k}} (h_\varepsilon(s)-h(s)) ds,  \widetilde{V}^{\varepsilon}(h_{\varepsilon}, t_{k};h, t_{k})\Big)\right| \\
&+  2\hspace{.08cm} \widetilde{\mathbb{E}} \sup_{1\leq k\leq 2^{n}}\sup_{t_{k-1}\leq t\leq t_{k}} \Big|\Big(g(t_{k},u^{0}(t_{k})) \int_{t_{k-1}}^{t}(h_\varepsilon(s)-h(s))ds, \widetilde{V}^{\varepsilon}(h_{\varepsilon}, t_{k}; h, t_{k})\Big)\Big| \\
&= J_{1}(n,\ve)+J_{2}(n,\ve)+J_{3}(n,\ve)+ J_{4}(n,\ve).
\end{align*}
Using the Cauchy-Schwarz inequality and \eqref{(2.6)}, we find
\begin{eqnarray*}
&&J_{1}(n,\ve)\\
&&\leq 2\hspace{.08cm}\sqrt{2M}  \left(\int_{0}^{T} k_{1}(1+\|u^{0}(s)\|^{2})\hspace{.08cm}\mathbb{E} \| \widetilde{V}^{\varepsilon}(h_{\varepsilon},s; h,s)-  \widetilde{V}^{\varepsilon}(h_{\varepsilon}, \psi_n(s);h,\psi_n(s))\|^{2} ds\right)^{\frac12},
\end{eqnarray*}
and with the observation
\begin{equation*}
\|\widetilde{V}^{\varepsilon}(h_{\varepsilon},s;h,s)-\widetilde{V}^{\varepsilon}(h_{\varepsilon},\psi_n(s);h, \psi_n(s))\|^{2} \leq 2\|\widetilde{v}_{h_{\varepsilon}}^{\varepsilon}(s)- \widetilde{v}_{h_{\varepsilon}}^{\varepsilon}(\psi_n(s))\|^{2} + 2\|\widetilde{v}_{h} (s)- \widetilde{v}_{h}  (\psi_n(s))\|^{2},
\end{equation*}
we obtain by Lemma \ref{tlemma} and  {\eqref{u0}}
\begin{equation*}
J_{1}(n,\ve) \leq    4\left(M k_1(1+N_2(T) {\|\gamma\|^2}) (\widetilde{C}(T,M)+\hat{C}(T,M))\right)^{\frac12}\hspace{.08cm} 2^{-\frac{n}{4}}.
\end{equation*}
By  \eqref{vh2pinequality} and \eqref{Xhinequality}  we obtain
 $$ \widetilde{\mathbb{E}}\int_0^T\| \widetilde{V}^{\varepsilon}(h_{\varepsilon},\psi_n(s); h, \psi_n(s))\|^{2}ds \leq 2\hspace{.08cm} T\big(\widetilde{N}_{2}(T,M)+ \hat{N}_{2}(T,M)\big).$$
Therefore by \eqref{Holder} and $h_\varepsilon, h\in {\mathcal P}_M$  it holds
\begin{equation*}
J_{2}(n,\ve) \leq    2  \sqrt{k_4 \hspace{.08cm}T}  \left(    T\big( \widetilde{N}_{2}(T,M)+ \hat{N}_{2}(T,M)\big) +M  \right)   2^{-\frac{n}{2}}.
\end{equation*}
In addition, for each fixed $k\in \{1,..., 2^{n}\}$, $g(t_{k}, u^{0}(t_{k})) $ being a Hilbert-Schmidt operator implies that it is also a compact operator, hence it is also strongly continuous, and
  \begin{equation*}
\Big\|g(t_{k}, u^{0}(t_{k})) \int_{t_{k-1}}^{t_{k}}( h_\varepsilon(s)-h(s)) ds\Big\|  \to 0 \mbox{ as }  \varepsilon\to 0 , \  {\widetilde{P}}\mbox{-a.s.}
\end{equation*}
The dominated convergence theorem implies
\begin{equation*}
\widetilde{\mathbb{E}} \Big\|g(t_{k}, u^{0}(t_{k})) \int_{t_{k-1}}^{t_{k}}( h_\varepsilon(s)-h(s)) ds\Big\|^2  \to 0 \mbox{ as }  \varepsilon\to 0 .
\end{equation*}
Hence
  \begin{align*}
	& \widetilde{\mathbb{E}}  \Big|\Big(g(t_{k},u^{0}(t_{k}))\int_{t_{k-1}}^{t_{k}} (h_\varepsilon(s)-h(s)) ds,  \widetilde{V}^{\varepsilon}(h_{\varepsilon}, t_{k};h, t_{k})\Big)\Big|\\ &\le
	\Big(  \widetilde{\mathbb{E}} \Big\|g(t_{k}, u^{0}(t_{k}))\int_{t_{k-1}}^{t_{k}}(h_\varepsilon(s)-h(s))ds\Big\|^2\Big)^{\frac12}\Big(\widetilde{\mathbb{E}}\| \widetilde{V}^{\varepsilon}(h_{\varepsilon}, t_{k};h, t_{k})\|^2\Big)^{\frac12} \to 0 \mbox{ as }  \varepsilon\to 0.
\end{align*}
For fixed $n$ we then have   $$\lim_{\ve\to 0}J_{3}(n,\ve)=0 .$$
By \eqref{(2.6)}, \eqref{vh2pinequality}, \eqref{Xhinequality} and $h_\varepsilon, h\in {\mathcal P}_M$ we  write
\begin{align*}
&  J_{4}(n,\ve)\leq
  2\hspace{.08cm}\widetilde{\mathbb{E}} \sup_{1\leq k\leq 2^{n}}\|g(t_{k},u^{0}(t_{k})) \|_{L_2}\|\widetilde{V}^{\varepsilon}(h_{\varepsilon}, t_{k}; h, t_{k})\|   \int_{t_{k-1}}^{t_k}|h_\varepsilon(s)-h(s)|_0ds      \\
&  \leq 2\Big( k_{1}(1 +N_2(T) {\|\gamma\|^2})2\big(\widetilde{N}_{2}(T,M)+ \hat{N}_{2}(T,M)\big)\Big)^{\frac12}&\\
&\hspace{6cm} \times \Big(\sup_{1\leq k\leq 2^{n}}(t_k-t_{k-1})\int_{t_{k-1}}^{t_{k}} | h_\varepsilon(s)-h(s)|_0^{2}ds\Big)^{\frac12} \\
&\leq   2 \Big( MTk_{1}(1 +N_2(T) {\|\gamma\|^2}) \big(\widetilde{N}_{2}(T,M)+ \hat{N}_{2}(T,M)\big)\Big)^{\frac12}   2^{-\frac{n}{2}}.
\end{align*}
Thus, by Gronwall's inequality we arrive at
\begin{equation*}
\widetilde{\mathbb{E}}  \sup_{0\leq t\leq T} \| \widetilde{V}^{\varepsilon}(h_{\varepsilon}, t; h, t)\|^{2} \leq C_2\Big(\varepsilon +  a(\varepsilon)+ \frac{\varepsilon}{a^{2}(\varepsilon)}+ 2^{-\frac{n}{4}} + J_{3}(n,\ve)\Big),
\end{equation*}
 and setting $\varepsilon \to 0$ and then $n\to \infty$ we obtain \eqref{expectationlimit}.\\
\indent We conclude that from any sequence $\{h_{\varepsilon} \}_{ \varepsilon\in(0,\varepsilon_{0})}$, which converges weakly to $h$, one can extract a subsequence $\{h_{\varepsilon_k} \}_{k\in{\mathbb N}}$ such that $\{v^{\varepsilon_k}_{h_{\varepsilon_k}}\}_{k\in{\mathbb N}} $   converges in distribution to the same limit $X^h$ in space $\mathcal{C}([0,T];H)$ as $k\to \infty$. This implies that the whole sequence $\{v^{\varepsilon}_{h_{\varepsilon}}\}_{\varepsilon\in(0,\varepsilon_{0})} $   converges in distribution to $X^h$ in space $\mathcal{C}([0,T];H)$,   as $\varepsilon \to 0$.
\end{proof}
\begin{lemma}\label{LEM3}
Let $X^{h} $ be the unique solution to \eqref{Xhcontrolled} corresponding to $h\in S_{M}$.  Then the set $\{X^{h}: h\in S_{M}\}$ is compact in $\mathcal{C}([0,T];H)$.
\end{lemma}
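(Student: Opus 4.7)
The plan is to establish compactness by showing that the map $h \mapsto X^h$ is continuous from $S_M$ (equipped with the weak topology of $L^2([0,T];H_0)$) into $\mathcal{C}([0,T];H)$. Since $S_M$ is a compact metric space under this weak topology, its continuous image $\{X^h : h \in S_M\}$ will then automatically be compact in $\mathcal{C}([0,T];H)$. The deterministic nature of \eqref{Xhcontrolled} allows us to bypass the Skorohod/Burkholder-Davis-Gundy machinery needed in Lemma \ref{LEM2}, so the proof should be a streamlined version of that argument.

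Fix a sequence $\{h_n\}_n \subset S_M$ converging weakly to some $h \in S_M$, and subtract the corresponding skeleton equations \eqref{Xhcontrolled} to obtain
\[
X^{h_n}(t) - X^h(t) = -i\int_0^t A(X^{h_n}(s) - X^h(s))\,ds - \int_0^t \mathcal{U}(s)(X^{h_n}(s) - X^h(s))\,ds + \int_0^t g(s, u^0(s))(h_n(s) - h(s))\,ds.
\]
Applying the chain rule to $t \mapsto \|X^{h_n}(t) - X^h(t)\|^2$ and using $\text{Im}\,\langle Av, v\rangle = 0$, the Laplacian contribution vanishes; the $\mathcal{U}$-term is controlled by \eqref{K5}, yielding
\[
\|X^{h_n}(t) - X^h(t)\|^2 \leq C_1 \int_0^t \|X^{h_n}(s) - X^h(s)\|^2\,ds + 2\sup_{0 \leq \tau \leq T}\Big|\int_0^\tau \text{Re}\,(g(s, u^0(s))(h_n(s) - h(s)), X^{h_n}(s) - X^h(s))\,ds\Big|.
\]
By Gronwall's inequality, it suffices to show that the supremum on the right tends to zero as $n \to \infty$.

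This last step is the main obstacle and is handled exactly as term $I_4$ in the proof of Lemma \ref{LEM2}. Using the step function $\psi_m$ from \eqref{psin}, split the cross-term integral into four pieces analogous to $J_1(n,\varepsilon),\dots,J_4(n,\varepsilon)$ of Lemma \ref{LEM2}. The pieces corresponding to $J_1$ and $J_4$ are of order $2^{-m/4}$ and $2^{-m/2}$ uniformly in $n$, using the time-regularity bound \eqref{Xh}, the uniform bound \eqref{Xhinequality}, and the fact that $h_n, h \in S_M$; the analogue of $J_2$ is similarly controlled by the H\"older estimate \eqref{Holder}. The crucial term, the analogue of $J_3(m,n)$, vanishes as $n \to \infty$ for each fixed $m$: since $g(t_k, u^0(t_k))$ is Hilbert--Schmidt, hence compact from $H_0$ into $H$, and since weak convergence of $h_n$ to $h$ in $L^2([0,T];H_0)$ implies $\int_{t_{k-1}}^{t_k}(h_n(s) - h(s))\,ds \rightharpoonup 0$ in $H_0$, compactness sends this weakly convergent sequence to a strongly convergent one in $H$. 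Letting $n \to \infty$ first and then $m \to \infty$ yields $\sup_{0 \leq t \leq T}\|X^{h_n}(t) - X^h(t)\|^2 \to 0$, establishing continuity and hence compactness.
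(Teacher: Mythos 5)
Your proof is correct and follows exactly the route the paper takes: continuity of $h\mapsto X^h$ from $S_M$ with the weak topology into $\mathcal{C}([0,T];H)$, established via the chain rule, Gronwall, and the time-discretization argument of Lemma \ref{LEM2} for the cross term, followed by the observation that the continuous image of a compact set is compact. In fact you supply more detail on the cross-term estimate than the paper does, which simply defers to the proof of Lemma \ref{LEM2}.
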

\begin{proof}
Since $S_{M}$ is a compact set under the weak topology of $L^2({0,T};H_0)$, we show that $h\mapsto X^{h}$ is a continuous map.
Let $h_{\varepsilon},h\in S_{M},\ve>0,$ be such that $\{h_{\varepsilon}\}_{\ve\ge 0}$ converges weakly to $h$ in the space $L^2(0,T;H_0)$. We have for $t\in[0,T]$
\begin{eqnarray*}
\|X^{h_{\varepsilon}}(t)- X^{h}(t)\|^{2} &=& 2 \text{Im} \int_{0}^{t}  {\langle A X^{h_{\varepsilon}}(s)- AX^{h}(s),X^{h_{\varepsilon}}(s)- X^{h}(s)\rangle }ds\\
&& +2  \text{Re} \int_{0}^{t} \left(\mathcal{U}(s)\left(X^{h_{\varepsilon}}(s)-X^{h}(s)\right), X^{h_{\varepsilon}}(s)-X^{h}(s)\right)ds\nonumber\\
&&+ 2  \text{Re}\int_{0}^{t} \left(g(s,u^{0}(s))(h_{\varepsilon}(s)-h(s)), X^{h_{\varepsilon}}(s)-X^{h}(s)\right)ds\nonumber\\
&\leq& 2\sqrt{k_6} \int_{0}^{t}  \hspace{.08cm} \|X^{h_{\varepsilon}}(s)-X^{h}(s)\|^{2}ds\nonumber\\
&&+ 2 \int_{0}^{t} \text{Re}\left(g(s,u^{0}(s))(h_{\varepsilon}(s)-h(s)), X^{h_{\varepsilon}}(s)-X^{h}(s)\right)ds.\nonumber
\end{eqnarray*}
We   apply  a  {similar discretization technique as} in the proof of  Lemma   \ref{LEM2}, analogous to \eqref{expectationlimit}, to obtain
\begin{equation*}
 \lim_{\ve \to 0}\sup_{0\leq t\leq T} \|X^{h_{\varepsilon}}(t)-X^{h}(t )\|^{2} =0.
\end{equation*}
Hence, $\{X^{h}: h\in S_{M}\}$ is compact, being the continuous image of a compact set.
\end{proof}
Thus, we obtain by \cite[Theorem 6]{BDM}, the MDP in Theorem 1 with rate function given by \eqref{rate}.
\section{Central Limit Theorem} \label{sec6}
To achieve the central limit theorem, Theorem \ref{TH3},   we prove that $\{\hat{Z}^{\varepsilon} \}_{ \varepsilon\in(0,\varepsilon_{0})}$ (see \eqref{Zhat}) converges in probability to $Z^0 $ (see \eqref{Vequation}) as $\varepsilon\rightarrow 0$.
Observe that $\hat{Z}^{\varepsilon} $ is the unique variational solution to
\begin{equation}\label{ZCLT}
\hat{Z}^{\varepsilon}(t)= -i \hspace{.08cm} \int_{0}^{t}A\hat{Z}^{\varepsilon}(s) ds + \int_{0}^{t}\mathcal{U}(s)\hat{Z}^{\varepsilon}(s)ds + \int_{0}^{t}  {\hat{g}}(s,\hat{Z}^{\varepsilon}(s))dW(s), t\in[0,T], \mbox{ a.e. } \omega\in \Omega,
\end{equation}
 {Note that $\hat{Z}^{\varepsilon}$ is $v^\ve$, the solution of \eqref{MDPequation} for the special case
 {$a(\varepsilon)=1$}, where for this value of $a(\epsilon)$ we obtain $\hat{g}(s, \hat{Z}^{\varepsilon}(s))$ in \eqref{ZCLT} based on \eqref{MDPg}.}
Hence, by \eqref{vk} we have
\begin{equation} \label{Zp}
\mathbb{E} \sup_{0\leq t\leq T} \|\hat{Z}^{\varepsilon}(t)\|^{2p}\leq {\bar N}_{2p}(T), \mbox{ for }1\le p<\infty,
\end{equation}
and by \eqref{vVnorm}
$$\mathbb{E}\sup_{0\leq t\leq T} \|\hat {Z}^{\varepsilon}(t)\|_{V}^{2}  \leq \bar K (T).$$
Thus, the Chebyshev inequality gives
\begin{equation}\label{Chebyshev}
\lim_{N\rightarrow \infty} \sup_{0<\varepsilon<\varepsilon_{0}} P\left(\sup_{0\leq t\leq T} \frac{\|u^{\varepsilon}(t)-u^{0}(t)\|}{\sqrt{\varepsilon}} > N\right)=0.
\end{equation}
Observe that $Z^0$ in \eqref{Vequation} is $\hat{Z}^{0}$, the solution of \eqref{ZCLT} in the special case $\ve=0$ and $a(\ve)=1$ and as above we have
$$
\mathbb{E} \sup_{0\leq t\leq T} \|Z^0(t)\|^{2p}\leq {\bar N}_{2p}(T) \mbox{ for } {1\leq p<\infty},
\mbox{ and } \mathbb{E}\sup_{0\leq t\leq T} \|{Z}^0(t)\|_{V}^{2}  \leq \bar K (T).
$$
\subsection{Proof of Theorem \ref{TH3}:}
We use \eqref{ZCLT}, \eqref{Vequation},   It\^o's formula and \eqref{opA} to obtain
\begin{align*}
&\mathbb{E} \sup_{0\leq s\leq t} \|\hat{Z}^{\varepsilon}(s)-Z^0(s)\|^{2} \leq { 2 \hspace{.08cm} \mathbb{E} \hspace{.08cm}  \int_{0}^{t}\left| \left(\mathcal{U}(s)(\hat{Z}^{\varepsilon}(s)-Z^0(s)), \hat{Z}^{\varepsilon}(s)-Z^0(s)\right)\right|ds}\\
&+ 2\hspace{.08cm} \mathbb{E} \sup_{0\leq s\leq t}  {\Big|} \int_{0}^{s}  \left(\hat{g}(r,\hat{Z}^{\varepsilon}(r))-g(r,u^{0}(r))dW(r), \hat{Z}^{\varepsilon}(r)-Z^0(r)\right) {\Big|}\\
&+ \mathbb{E}\int_{0}^{t} \|\hat{g}(s, \hat{Z}^{\varepsilon}(s))-g(s,u^{0}(s))\|_{L_{2}}^{2}ds \mbox{ for  }t\in[0,T].
 \end{align*}
 Using  the Burkholder-Davis-Gundy inequality,  \eqref{MDPg},  \eqref{(2.8)} and \eqref{Zp}, we continue for each $t\in[0,T]$
 \begin{align*}
 &\mathbb{E} \sup_{0\leq s\leq t} \|\hat{Z}^{\varepsilon}(s)-Z^0(s)\|^{2}\leq 2\hspace{.08cm}\sqrt{k_6}\hspace{.08cm} \mathbb{E}\int_{0}^{t} \|\hat{Z}^{\varepsilon}(s)-Z^0(s)\|^{2}ds + \frac{1}{2}\hspace{.08cm}\mathbb{E} \sup_{0\leq s\leq t} \|\hat{Z}^{\varepsilon}(s)-Z^0(s)\|^{2} \\
 &+ \varepsilon 18k_{3}\hspace{.08cm} \mathbb{E}\int_{0}^{t} \|\hat{Z}^{\varepsilon}(s)\|^{2}ds\\
&\leq  2\sqrt{k_6} \hspace{.08cm} \mathbb{E} \int_{0}^{t} \sup_{0\leq r \leq s} \|\hat{Z}^{\varepsilon}(r)-Z^0(r)\|^{2}ds  + \frac{1}{2}\sup_{0\leq s\leq t} \|\hat{Z}^{\varepsilon}(s)-Z^0(s)\|^{2}+ 18\hspace{.08cm} \ve k_{3}\hspace{.08cm}T \hspace{.08cm}\bar N_2(T),
\end{align*}
to achieve by Gronwall's lemma
\begin{equation*}
\mathbb{E}\sup_{0\leq t\leq T} \|\hat{Z}^{\varepsilon}(t)-Z^0(t)\|^{2} \leq  C_1(T, k_{3}, k_6, \bar N_2(T)) \varepsilon.
\end{equation*}
We conclude
$$ \lim_{\ve\to0} \mathbb{E}\sup_{0\leq t\leq T} \|\hat{Z}^{\varepsilon}(t)-Z^0(t)\|^{2}=0,$$
which implies to convergence in probability as stated in Theorem \ref{TH3}.


\appendix\section{}

 \subsection{Finite dimensional approximations}  \label{5.1} First we recall a few results from functional  analysis.
 Denote by
$\{\mu_k\}_{k}$ the increasing sequence of real eigenvalues and by $\{\varphi_k\}_{k }$ the corresponding eigenfunctions of the operator $A$ defined in \eqref{eqA}. The eigenfunctions $\{\varphi_k\}_{k}$ form an orthonormal system in $H$ and they are orthogonal in $V$. Notice that for all   $v \in V$  we have
\begin{equation}
  Av = \sum_{k=1}^{\infty} \mu_k ({v},{\varphi_k})   \varphi_k\,,\hspace{.2cm} \langle {Av},{v}\rangle = \sum_{k=1}^{\infty} \mu_k \left| ({v},{\varphi_k}) \right|^2=\|v\|_V^2, \hspace{.2cm}   \mbox{Im}  \langle Av,v\rangle=0. \label{opA}
 \end{equation}
 For each {$n\in \mathbb{N}$}, let $H_n= \text{span}\{\varphi_{1}, \varphi_{2}, ..., \varphi_{n}\}$ be the finite dimensional space equipped with the norm induced from $H$ and let $\pi_{n}: H\rightarrow H_{n}$ be the finite dimensional projection of $H$ onto $H_{n}$ defined by
\begin{equation}\pi_{n}u:= \sum\limits_{k=1}^{n} (u, \varphi_{k}) \varphi_{k}.\label{proj}
\end{equation}
We have for all $u,\hat u\in H$
\begin{equation} \label{new*00}
 \sum_{k= 1}^{n} |({u},{\varphi_k})|^2 =\| \pi_n u\|^2 \le \|u\|^2,\quad
(\hat u,\pi_n u)=(\pi_n \hat u,\pi_n u),
 \end{equation}
\begin{equation} \label{new*1}
 \sum_{k=n+1}^{\infty} |({u},{\varphi_k})|^2 =\|(I-\pi_n)u\|^2 \le \|u\|^2,
 \end{equation}
\begin{equation} \label{new*0}
  (\hat u,(I-\pi_n)u) =
	((I-\pi_n)\hat u,(I-\pi_n)u).
 \end{equation}
 For all $n\in{\mathbb N}, v,\hat v\in V $  we recall that  $A\pi_n v= \sum\limits_{k=1}^{n} \mu_{k}(v,\varphi_{k})\varphi_{k}\in H_n$,
\begin{equation}
\| \pi_n v\|_V\le \|v\|_V \mbox{ and }
(\hat v,A\pi_n v)=(\pi_n \hat v, A\pi_nv) \le \| \pi_n\hat v\|_V \| \pi_n v\|_V\le \|  \hat v\|_V \| \pi_n v\|_V . \label{ineq}
\end{equation}
Notice that for any $v, \hat v\in H$ and $m,n\in \mathbb{N}$ with $m<n$,
\begin{align}\label{equality}
\left( \hat v, A (\pi_{n}-\pi_{m})  v\right) &= \left((\pi_{n}-\pi_{m})\hat v, A (\pi_{n}-\pi_{m})  v\right)\nonumber
\\ &\le \|(\pi_{n}-\pi_{m})\hat v\|_V\|(\pi_{n}-\pi_{m})  v\|_V \le \|\hat v\|_V\|(\pi_{n}-\pi_{m})  v\|_V.
\end{align}

 \subsection{Some properties of convergent series:}  \label{5.2} For a convergent series
 $ \sum\limits_{k\ge 1} a_k$ with   positive terms,  the remainder of the series satisfies
$$
 \lim_{n\to\infty}\sum\limits_{k=n+1}^\infty a_k  =0,
$$
and the sequence of partial sums $ \left\{ \sum\limits_{k= 1}^n   a_k\right\}_n$  is a Cauchy sequence.
\medskip

 \subsection{Proof of estimate (\ref{u0sup}):} \label{5.3}
Similar to the proof of Theorem \ref{TH4}  we  derive (\ref{u0sup}).
Recall that $u^0\in  \mathcal{C}([0,T];H))\cap L^{2}( 0,T;V)$ is the solution of
\begin{equation}\label{u00}
(u^{0}(t),v)= (\gamma,v)  -  i \!\! \int_{0}^{t} \langle Au^{0}(s),v\rangle ds   +   \!  \int_{0}^{t} \!\!(\mathcal{U}(s)u^{0}(s),v)ds
\end{equation}
 for every $t\in[0,T],\ v\in V.$
 For each $k\in \mathbb{N}$
    we  have
\begin{align}\label{new*99}
 |(u^0(t), \varphi_{k})|^{2} =& |(\gamma, \varphi_{k})|^{2}+   2  \text{Im} \int_{0}^{t}(Au^0(s), \varphi_{k})(\overline{u^0(s), \varphi_{k}})ds \nonumber\\ &+ 2 \text{Re} \int_{0}^{t} \left(\mathcal{U}(s)u^0(s), \varphi_{k}\right) (\overline{u^0(s),\varphi_{k}})ds , \mbox{ for every }t\in[0,T]  ,
\end{align} where $\{\varphi_k\}_{k}$ was defined in Subsection \ref{5.1}.
Then, for
$\pi_n u^0$, the finite dimensional projection of $u^0$, see \eqref{proj}, we write
\begin{align} \label{new*49}
  \|(I-\pi_n) u^0(t)\|^{2} = \|(I-\pi_n) \gamma\|^{2} +   2  \text{Re} \int_{0}^{t} \left( \mathcal{U}(s)u^0(s) ,(I- \pi_n) u^0\right) ds,
\end{align}
for every   $n\in \mathbb{N}$,  $t\in[0,T]$. Then by using \eqref{K5}   we obtain
{\begin{align}
   &\sup_{0\leq t\leq T}   \|(I - \pi_{n} ) u^0(t)\|^{2} \le  \|(I-\pi_n) \gamma\|^{2} +
    2  \int_{0}^{T} \big| (  \mathcal{U}(s) u^0(s)  ,
		(I- \pi_{n})   u^0(s))\big| ds \nonumber
\\ & \le  \|(I-\pi_n) \gamma\|^{2} +  k_6\Big(  \sup_{s\in[0,T]} \| u^0(s)\|^{2} \Big)^{\frac12}\Big( \int_{0}^{T}\!\!  \left\| (I-\pi_{n}) u^{0}(s)\right\|^{2}ds\Big)^{\frac12} . \label{u1}
 \end{align}
}We use \eqref{u0}, \eqref{new*1} and Subsection \ref{5.2} to arrive at
 \begin{equation}
\lim_{n\rightarrow \infty}   \sup_{0\leq t\leq T}  \|(I-\pi_{n})u^0(t)\|^{2}  =0. \label{conv19}
\end{equation}
Multiplying both sides of \eqref{new*99} by $\mu_{k}$  {(see Subsection \ref{5.1}) }and summing from $k=1$ to $n$, where $n  \in \mathbb{N}$, we have for all $t\in[0,T]$
\begin{align}
    \|  \pi_{n} u^0(t)\|_V^{2} =   \|  \pi_{n} \gamma\|_V^{2} + 2  \text{Re}   \int_{0}^{t} \left(  \mathcal{U}(s)u^0(s), A   \pi_{n} u^0(s) \right)ds .
  \label{new*139}
	\end{align}
Then by \eqref{K5}, \eqref{ineq} and \eqref{uV0} we get the estimate
\begin{align} \label{import0}
&\sup_{0\leq t\leq T}  \| \pi_{n}  u^0(t)\|^{2}_V \le     C_1  \Big( \|\gamma\|^2_V+ \int_{0}^{T} \|u^0(s)\|_{V}^{2}    ds\Big)    \le C_1  \|\gamma\|^2_V\big(1+  N(T) \big)   .
\end{align}
Similarly, by \eqref{new*99}, \eqref{K5} and \eqref{equality}  we write for each
  $n>m$,  $n,m \in \mathbb{N}$ and every $t\in[0,T]$
\begin{align*}
&\sup_{0\leq t\leq T} \|(\pi_{n}-\pi_m) u^0(t)\|^{2}_V \le \|(\pi_n-  \pi_m) \gamma\|_V^{2}+ 2
 \int_{0}^{T}\big|( \mathcal{U}(s)u^0(s) ,A  (\pi_{n}-\pi_{m})u^0(s)  )\big|ds \\
   &\leq \|(\pi_n-  \pi_m) \gamma\|_V^{2}+ \Big(C_3\int_{0}^{T} \| u^0(s)\|^{2}_Vds\Big)^{\frac12} \Big( \int_{0}^{T}
 \|(\pi_{n}-\pi_{m})u^0(s)\|_{V}^{2}ds \Big)^\frac12 .
\end{align*}
Estimate \eqref{uV0} and the Cauchy criterion from Subsection \ref{5.2}
yield
$$\lim \limits_{\substack{m \to  \infty\\n \to \infty}} \hspace{.08cm} \sup_{0\leq t\leq T} \|(\pi_{n}-\pi_{m})u^0(t)\|_{V}^{2}=0.$$
 {Hence $\{\pi_{n}u^0 \}_{n}$ is a Cauchy sequence in   $ \mathcal{C}([0,T];V) $ and
 there exists  $\hat u^0 \in   \mathcal{C}([0,T];V) $ to which $\{\pi_{n}u^0 \}_{n}$ converges in $ \mathcal{C}([0,T];V) $.
We have the following  inequality
\begin{equation*}
 \sup_{0\leq t\leq T} \|u^0(t)- \hat{u}^{0}(t)\|^{2} \leq  2\sup_{0\leq t\leq T} \|(I-\pi_{n})u^0(t)\|^{2} + 2 k_0\sup_{0\leq t\leq T} \|\hat{u}^{0}(t)-\pi_{n}u^{0}(t)\|^{2}_V,
\end{equation*}
where $k_0$ is the embedding constant of $V\hookrightarrow H$. Then by applying \eqref{conv19} $u^0(t)= \hat{u}^{0}(t)$ for all $t\in [0,T]$.}
By  \eqref{import0} we obtain
\begin{equation*}
\sup_{0\leq t\leq T} \|u^0(t)\|_{V}^{2} =  \sup_{0\leq t\leq T} \|\hat{u}^{0}(t)\|_{V}^{2} = \lim_{n\rightarrow \infty} \sup_{0\leq t\leq T} \| \pi_{n}u^0(t)\|^{2}_V \leq K(T)  \|\gamma\|^2_V ,
\end{equation*}
where $K(T)=C_1 \big(1+    N (T) \big)$.
\medskip

\subsection{Proof of estimate (\ref{vk}):} \label{5.4}
\indent Note that by \eqref{(2.1)}, \eqref{uHbound} for $u^\ve$ and \eqref{u0} for $u^0$ we have that
$\mathbb{E} \displaystyle\sup_{0\leq t\leq T} \|v^{\varepsilon}(t)\|^{2p}<\infty.$  {We use} \eqref{MDPequation}, It\^o's formula, \eqref{opA}, take supremum on time, then the expectation, and use the Burkholder-Davis-Gundy, Young  and Cauchy-Schwarz inequalities, to write for each $t\in[0,T]$  and $1\leq p <\infty$
\begin{align*}
&\mathbb{E}\sup_{0\leq s\leq t}\|v^{\varepsilon}(s)\|^{2p} \leq 2p \hspace{.08cm} \mathbb{E} \int_{0}^{t}\Big| \left(\mathcal{U}(s) v^{\varepsilon}(s), v^{\varepsilon}(s)\right)\Big|\|v^{\varepsilon}(s)\|^{2(p-1)}ds \\
&+ 2p\hspace{.08cm}  a(\varepsilon)\hspace{.08cm} \mathbb{E}\sup_{0\leq s\leq t}\Big|\int_{0}^{s}  \left(\hat{g}(r,v^{\varepsilon}(r))dW(r), v^{\varepsilon}(r)\right) \|v^{\varepsilon}(r)\|^{2(p-1)}\Big|  \\
&+ p  \hspace{.08cm}a^{2}(\varepsilon) \hspace{.08cm} \mathbb{E} \int_{0}^{t} \|\hat{g}(s,v^{\varepsilon}(s))\|_{L_{2}}^{2} \|v^{\varepsilon}(s)\|^{2(p-1)}ds  &\\
&+ 2p(p-1)\hspace{.08cm} a^{2}(\varepsilon)\hspace{.08cm}  \mathbb{E}\int_{0}^{t} \|\hat{g}(s,v^{\varepsilon}(s))\|_{L_{2}}^{2} \|v^{\varepsilon}(s)\|^{2(p-1)}ds&\\
&\leq 2p\hspace{.08cm}\sqrt{k_6} \hspace{.08cm} \mathbb{E} \int_{0}^{t} \|v^{\varepsilon}(s)\|^{2p}ds+ 6 pa(\varepsilon)  \hspace{.08cm}\mathbb{E}\left(\int_{0}^{t} \|\hat{g}(s,v^{\varepsilon}(s))\|_{L_{2}}^{2}  \|v^{\varepsilon}(s)\|^{4p-2} ds\right)^{\frac12}\\
& +  p(2p-1)\hspace{.08cm}a^{2}(\varepsilon) \hspace{.08cm} \mathbb{E} \int_{0}^{t} \|\hat{g}(s,v^{\varepsilon}(s))\|_{L_{2}}^{2} \|v^{\varepsilon}(s)\|^{2(p-1)}ds \\
&\leq \frac{1}{2} \hspace{.08cm} \mathbb{E}\sup_{0\leq s\leq t} \|v^{\varepsilon}(s)\|^{2p} +  C_1(p)\hspace{.08cm} a^{2}(\varepsilon)\hspace{.08cm}  \mathbb{E}\int_{0}^{t} k_{1}\left(1+ \left\|\frac{\sqrt{\varepsilon}}{a(\varepsilon)} v^{\varepsilon}(s) + u^{0}(s)\right\|^{2p}\right)ds  \\
&+ \left(2p\sqrt{k_{6}}+ C_2(p)a^{2}(\varepsilon)\right)\hspace{.08cm} \mathbb{E} \int_{0}^{t}\|v^{\varepsilon}(s)\|^{2p}ds \\
&\leq \frac{1}{2}  \mathbb{E}\sup_{0\leq s\leq t} \|v^{\varepsilon}(s)\|^{2p} +a^{2}(\varepsilon) C_3(p,k_1,N_{2}(T),\|\gamma\|,T) +
  C_4(p,k_1,k_6)\hspace{.08cm} \mathbb{E}\int_{0}^{t} \|v^{\varepsilon}(s)\|^{2p}ds,
\end{align*}
where we used  \eqref{epsi}. Then Gronwall's inequality yields
 {\begin{equation*}
\mathbb{E}\sup_{0\leq t\leq T} \|v^{\varepsilon}(t)\|^{2p} \leq a^{2}(\varepsilon) \bar N_{2p}(T),
\end{equation*}
where
\begin{equation}
\label{veps}
\bar N_{2p}(T)=2 C_3(p,k_{1},N_{2p}(T),\|\gamma\|,T) e^{2 C_4(p,k_1,k_6) T}  .
\end{equation}}
\medskip

\subsection{Proof of estimate  (\ref{vh2pinequality}):} \label{5.5}
For $n\ge 1$ we consider the stopping time  $\tau_n:= \inf\{t: \|v_{h}^{\varepsilon}(t)\|^{2}>n\}$, and for $h\in \mathcal{P}_{M}$ we introduce the process
\begin{equation} \label{theta2}
\theta(t)=\exp\Big(-\displaystyle c\int_0^t\big(1+ |h(s) |^{2}_0\big)ds\Big) , \ t\in[0,T],
\end{equation}
where $c>0$ is a constant whose value will be indicated later.
We use   \eqref{MDPstochastic}, the It\^o formula, \eqref{opA},       and apply the Burkholder-Davis-Gundy, Young's and Cauchy-Schwarz inequalities, along with  \eqref{(2.6)}, \eqref{K5}, \eqref{epsi} and \eqref{u0}   to write for each $t\in[0,T]$, $1\leq p<\infty$,
\begin{align*}
&\mathbb{E} \sup_{0\leq s\leq t\wedge \tau_n}  \theta(s)\|v_{h}^{\varepsilon}(s)\|^{2p}
\leq   2p  \hspace{.08cm} \mathbb{E}  \int_{0}^{t\wedge \tau_n}\theta(s) \left| \left(\mathcal{U}(s) v_{h}^{\varepsilon}(s), v_{h}^{\varepsilon}(s)\right)\right| \|v_{h}^{\varepsilon}(s)\|^{2(p-1)}ds\\
&  + 2p\hspace{.08cm} a(\varepsilon)\hspace{.08cm}   \mathbb{E} \sup_{0\leq s\leq t\wedge \tau_n} \Big|\int_{0}^{s}  \theta(r) \left(\hat{g}(r, v_{h}^{\varepsilon}(r)) {dW(r)}, v_{h}^{\varepsilon}(r)\right) \|v_{h}^{\varepsilon}(r)\|^{2(p-1)}\Big| \\
&  + 2p\hspace{.08cm}  \mathbb{E}  \int_{0}^{t\wedge \tau_n}  \theta(s)  \Big|\left(\hat{g}(s, v_{h}^{\varepsilon}(s)) h(s), v_{h}^{\varepsilon}(s)\right)\Big| \|v_{h}^{\varepsilon}(s)\|^{2(p-1)}ds
\\& +    p(2p-1)\hspace{.08cm}a^2 (\varepsilon)\hspace{.08cm}    \mathbb{E} \int_{0}^{t\wedge \tau_n} \theta(s) \|\hat{g}(s, v_{h}^{\varepsilon}(s))\|_{L_{2}}^{2} \|v_{h}^{\varepsilon}(s)\|^{2(p-1)}ds \\& -c \mathbb{E}  \int_{0}^{t\wedge \tau_n}  \theta(s) \|v_{h}^{\varepsilon}(s)\|^{2p}(1+ |h(s)|^2_0) ds \\
&\leq  2p\hspace{.08cm}\sqrt{k_6} \hspace{.08cm}\mathbb{E} \int_{0}^{t} \theta(s)\|v^{\varepsilon}(s)\|^{2p}ds
  +6 pa(\varepsilon)\hspace{.08cm}  \mathbb{E}\left(\int_{0}^{t\wedge \tau_n}\!\!\! \theta^2 (s) \|\hat{g}(s,v^{\varepsilon}(s))\|_{L_{2}}^{2}  \|v^{\varepsilon}(s)\|^{4p-2} ds\right)^{\frac12}
\\ &+2p\hspace{.08cm}  \mathbb{E}  \int_{0}^{t\wedge \tau_n}  \theta(s)  \|\hat{g}(s, v_{h}^{\varepsilon}(s))\|_{L_2} |h(s)|_0 \|v_{h}^{\varepsilon}(s)\|^{2 p-1 }ds\\
& +    a^2 (\varepsilon)\hspace{.08cm} C_1(p,k_1,N_2(T),\|\gamma\|)   \mathbb{E} \int_{0}^{t\wedge \tau_n} \theta(s) (1+ \|v_{h}^{\varepsilon}(s)\|^{2p})ds\\
&-c\hspace{.08cm} \mathbb{E}  \int_{0}^{t\wedge \tau_n}  \theta(s) \|v_{h}^{\varepsilon}(s)\|^{2p}(1+ |h(s)|^2_0) ds\\
&\le
\frac{1}{2}\hspace{.08cm} \mathbb{E} \sup_{0\leq s\leq t\wedge \tau_n}\theta(s)  \|v_{h}^{\varepsilon}(s)\|^{2p} + C_2(p,k_1, N_{2}(T),\|\gamma\|,T)  \\& + C_3(p,k_1,k_6,N_2(T),\|\gamma\|)\hspace{.08cm}  \mathbb{E}\int_{0}^{t\wedge \tau_n}\theta(s)  \|v_{h}^{\varepsilon}(s)\|^{2p}(1+ |h(s)|^2_0) ds\\
& -c \hspace{.08cm}\mathbb{E}  \int_{0}^{t\wedge \tau_n}  \theta(s) \|v_{h}^{\varepsilon}(s)\|^{2p}(1+ |h(s)|^2_0) ds.
\end{align*}
  We consider in \eqref{theta2} the constant
$c:=C_3(p,k_1,k_6, N_{2}(T),\|\gamma\|).$
Then we use that $h\in \mathcal{P}_{M}$ and we take $n\to\infty$ to conclude
\begin{align*}
\mathbb{E} \sup_{0\leq s\leq T }  \theta(s)\|v_{h}^{\varepsilon}(s)\|^{2p}
\leq 2C_2(p,k_1, N_{2p}(T)).
\end{align*}
By \eqref{theta2} we obtain
$$
\mathbb{E}\sup_{0\leq t\leq T} \|v_{h}^{\varepsilon}(t)\|^{2p} \leq \widetilde{N}_{2p}(T,M),$$
where
\begin{equation}
\widetilde{N}_{2p}(T,M)=2C_2(p,k_1, N_{2}(T),\|\gamma\|,T)\exp\left( \displaystyle c(T+M)\right)  .\label{A.11}
\end{equation}

\subsection{Proof of estimate (\ref{Xhinequality}):} \label{5.6}
 We have by \eqref{Xhcontrolled}, \eqref{opA}, \eqref{K5}, \eqref{(2.6)} and \eqref{u0}
\begin{align*}
&\sup_{0\leq s\leq t} \|X^{h}(s)\|^{2} \leq  2     \int_{0}^{t} \left| (\mathcal{U}(s)X^{h}(s), X^{h}(s) )\right|  ds +2    \int_{0}^{t} \left| (g(s,u^{0}(s))h(s), X^{h}(s) ) \right|  ds \\
&\leq 2 \sqrt{k_6} \int_{0}^{t} \|X^{h}(s)\|^{2}ds +
 2\int_{0}^{t} \|g(s,u^{0}(s))\|_{L_{2}} \hspace{.08cm} |h(s)|_0 \hspace{.08cm}  \|X^{h}(s)\|ds \\
&\le  \big(2 \sqrt{k_6}  +  k_1 + k_1 N_2(T)\|\gamma\|^2 \big)   \int_{0}^{t} \|X^{h}(s)\|^{2}ds +   \int_{0}^{t}    |h(s)|_0^2ds, \mbox{ for each } t\in[0,T]\,.
\end{align*}
Then Gronwall's lemma and $h\in {\mathcal P}_{M} $ lead to
\begin{equation*}
\sup_{0\leq s\leq T} \|X^{h}(s)\|^{2} \leq Me^{T(2 \sqrt{k_6}  +  k_1 +
k_1 N_2(T)\|\gamma\|^2) }\,,
\end{equation*}
and taking both sides of the above inequality to the $p$th power, with $1\leq p<\infty$, we obtain \eqref{Xhinequality}, where
\begin{equation}\label{A.12}
\hat{N}_{2p}(T,M)= M^pe^{pT(2 \sqrt{k_6}  +  k_1 +
k_1 N_2(T)\|\gamma\|^2) }.
\end{equation}
\section*{Acknowledgements}
 The authors are grateful to the anonymous referee for his/her time in reading the article in depth and providing detailed comments and suggestions that helped improve the paper.  P. Fatheddin is also thankful to Prof. Annie Millet for helpful conversations on the time discretization technique applied in the weak convergence approach.

\end{document}